\newtheorem{theorem}{Theorem}[section]
\newtheorem{corollary}[theorem]{Corollary}
\newtheorem{lemma}[theorem]{Lemma}
\newtheorem{proposition}[theorem]{Proposition}
\newtheorem{remark}[theorem]{Remark}
\newtheorem{example}[theorem]{Example}
\begin{document}

\begin{center}
{\Large\bf Center, centroid and subtree core of trees }\\
\vskip 1cm

{\large Dheer Noal Sunil Desai\hskip 1cm Kamal Lochan Patra}
\end{center}

\vskip 1cm

\begin{quote}
{\bf Abstract.} For $n\geq 5$ and  $2\leq g\leq n-3,$ consider the tree $P_{n-g,g}$ on $n$ vertices  which is obtained by adding $g$ pendant vertices to one degree $1$ vertex of the path $P_{n-g}$. We call the trees $P_{n-g,g}$ as path-star trees. We prove that over all trees on $n\geq 5$ vertices, the distance between center and  subtree core and the distance between centroid and subtree core are maximized by some path-star trees. We also prove that the tree $P_{n-g_0,g_0}$ maximizes both the distances among all path-star trees on $n$ vertices, where $g_0$ is the smallest positive integer such that $2^{g_0}+g_0>n-1.$ \\

\noindent {\bf Keywords}: Tree; Center; Centroid; Subtree core; Distance.\\

\noindent {\bf AMS subject classifications}. 05C05
\end{quote}

\section{Introduction}
Let $T$ be a tree with vertex set $V=V(T)$ and edge set $E=E(T)$. We denote by $d(v)$ the degree of a vertex $v\in V$. A vertex of degree one is called a {\it pendant} vertex of $T$. For $u,v\in V,$ the {\it length} of the $u-v$ path in $T$ is the number of edges in that path, and the {\it distance} between $u$ and $v$ in $T$, denoted by $d_T(u,v)$, is the length of the $u-v$ path. For subsets $U$ and $W$ of $V$, the distance $d_T(U,W)$ between $U$ and $W$ is defined by
$$d_T(U,W)=\underset{u\in U,w\in W}{\min}d_T(u,w).$$
For $v\in V$, the \textit{eccentricity} $e(v)$ of $v$ is defined by $e(v)=\max\{d_T(u,v):u\in V\}$. The \textit{radius} $rad(T)$ of $T$ is defined by $rad(T)=\min\{e(v):v\in V\}$
and the \textit{diameter} $diam(T)$ of $T$ is defined by $diam(T)=\max\{e(v):v\in V\}$. It is clear that $diam(T)=\max\{d_T(u,v):u,v\in V\}$. We say that $v$ is a \textit{central vertex} of $T$ if $e(v)=rad(T).$ The \textit{center} of $T,$ denoted by $C=C(T),$ is the set of all central vertices of $T$.

In a tree $T$, for any vertex $v$, $d_T(u,v)$ is maximum only when $u$ is a pendant vertex. Using this observation, the following result is proved (see \cite[Theorem 4.2]{har}).

\begin{theorem}\label{thm-center}
The center of a tree consists of either one vertex or two adjacent vertices.
\end{theorem}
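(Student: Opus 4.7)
The plan is to prove Theorem~\ref{thm-center} by induction on $n = |V(T)|$, using the operation of "peeling off" all pendant vertices to reduce to a smaller tree whose center coincides with that of $T$. The base cases $n=1$ and $n=2$ are immediate: the unique vertex, resp.\ both adjacent vertices, have minimum (indeed, equal) eccentricity.

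For $n \geq 3$, I would first observe that no pendant vertex can lie in $C(T)$. Indeed, if $v$ is a pendant vertex with unique neighbor $u$, then every $w \neq v$ satisfies $d_T(v,w) = d_T(u,w)+1$, so $e(v) = e(u)+1 > e(u)$, ruling out $v \in C(T)$. Next, let $T'$ be the tree obtained from $T$ by deleting all pendant vertices simultaneously; since $n \geq 3$, the graph $T'$ is a nonempty tree with fewer vertices than $T$. The crux of the argument is the identity $e_{T'}(v) = e_T(v) - 1$ for every $v \in V(T')$, from which it follows that the set of eccentricity-minimizers is the same in $T$ and in $T'$, i.e.\ $C(T) = C(T')$. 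Applying the inductive hypothesis to $T'$ then finishes the proof.

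To justify $e_{T'}(v) = e_T(v)-1$, I would use the observation highlighted just before the theorem: the maximum of $d_T(v,\cdot)$ is attained only at pendant vertices of $T$. For the lower bound, if $e_T(v)$ is realized at a pendant $p$ of $T$ with neighbor $p' \in V(T')$, then the $v$--$p'$ path in $T$ avoids $p$, hence $d_{T'}(v,p') = d_T(v,p') = e_T(v)-1$. For the upper bound, any $w \in V(T')$ is a non-pendant of $T$, so by the observation $d_T(v,w) \leq e_T(v) - 1$, and clearly $d_{T'}(v,w) = d_T(v,w)$ since paths in $T$ between non-pendants never traverse a leaf.

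The main delicate point I expect is handling the edge case where removing all pendants could reduce $T$ too much — for instance, a star $K_{1,n-1}$ collapses to a single vertex, and a path $P_n$ may collapse to $P_{n-2}$ which could have one or two vertices. These are naturally absorbed by carrying the base cases $n \in \{1,2\}$ through the induction, and by verifying that $T'$ is always nonempty when $n \geq 3$ (which holds because a tree on at least three vertices has at least one non-pendant vertex). Once these are addressed, the induction closes cleanly and yields that $C(T)$, equal to $C(T')$, is either a single vertex or a pair of adjacent vertices.
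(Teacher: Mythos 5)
Your proof is correct and is essentially the classical argument the paper points to: it uses exactly the observation stated just before the theorem (eccentricity is attained only at pendant vertices) to show that stripping all leaves yields a smaller tree with the same center, which is the proof of Theorem 4.2 in Harary cited by the authors. No substantive differences to report.
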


From the proof of the above result as given in \cite[Theorem 4.2]{har}, it is clear that, for any tree $T$, $C(T)$ is same as the center of any $u-v$ path in $T$ of length $diam(T)$.

For $v\in V$, a \textit{branch} (rooted) at $v$ is a maximal subtree containing $v$ as a pendant vertex. Note that the number of branches at $v$ is $d(v)$. The \textit{weight} of $v$, denoted by $\omega(v)=\omega_T(v)$, is the maximal number of edges in any branch at $v.$ We say that $v$ is a \textit{centroid vertex} of $T$ if $\omega(v)= \underset{u\in V}{\min} \;\omega(u).$ The \textit{centroid} of $T,$ denoted by $C_d=C_d(T)$, is the set of all centroid vertices of $T$.

The following result for the centroid of a tree is analogous to Theorem \ref{thm-center} (see \cite[Theorem 4.3]{har}).

\begin{theorem}\label{thm-centroid}
The centroid of a tree consists of either one vertex or two adjacent vertices.
\end{theorem}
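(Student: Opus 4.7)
The plan is to imitate the structure of the proof of Theorem \ref{thm-center}, with the branch-weight function $\omega$ playing the role of eccentricity. For adjacent vertices $u,v$, write $n_v(u)$ for the number of vertices in the component of $T-v$ containing $u$; then $n_v(u)+n_u(v)=n$ where $n=|V(T)|$, and by definition $\omega(v)=\max_{u\sim v} n_v(u)$. I will establish two key assertions and then conclude.

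First I would prove that \emph{at a centroid vertex $v$, every branch satisfies $n_v(u)\le n/2$.} Suppose, for contradiction, that some neighbor $u$ of $v$ has $n_v(u)>n/2$. Since the remaining branches at $v$ together contain fewer than $n/2$ vertices, necessarily $\omega(v)=n_v(u)$. Now compare $\omega(u)$: the branch at $u$ through $v$ has $n_u(v)=n-n_v(u)<n/2<n_v(u)$ edges, and every other branch at $u$ is contained in the component of $T-v$ containing $u$ and hence has at most $n_v(u)-1$ edges. So $\omega(u)<n_v(u)=\omega(v)$, contradicting the centroidality of $v$.

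Next I would show that \emph{any two distinct centroid vertices $v$ and $u$ are adjacent.} Consider the unique $v$-$u$ path $v=x_0,x_1,\ldots,x_k=u$ and partition $V(T)$ into sets $B_0,\ldots,B_k$, where $B_i$ collects all vertices whose closest vertex on the path is $x_i$. Since $x_i\in B_i$, each $|B_i|\ge 1$ and $\sum_i|B_i|=n$. Direct inspection gives $n_v(x_1)=|B_1|+\cdots+|B_k|$ and $n_u(x_{k-1})=|B_0|+\cdots+|B_{k-1}|$, and the previous step bounds each by $n/2$, so
\[
n \;\ge\; n_v(x_1)+n_u(x_{k-1}) \;=\; n \;+\; \sum_{i=1}^{k-1}|B_i|.
\]
This forces $\sum_{i=1}^{k-1}|B_i|\le 0$, which is possible only if $k\le 1$; that is, $u$ and $v$ are adjacent.

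The theorem now follows: if three distinct centroid vertices existed, the second step would make them pairwise adjacent, producing a triangle in $T$ and contradicting the assumption that $T$ is a tree. The main technical obstacle is the first assertion, namely recognizing that a branch of size greater than $n/2$ forces strict descent of $\omega$ upon moving into that branch; the remainder of the argument is essentially a clean partition-and-count identity that delivers adjacency mechanically.
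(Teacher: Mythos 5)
Your proof is correct: the key observation that a branch at $v$ with more than $n/2$ vertices forces $\omega(u)<\omega(v)$ for the neighbor $u$ in that branch, combined with the partition-and-count along the $v$--$u$ path, is exactly the classical (Jordan-style) argument that the paper outsources to Harary's Theorem 4.3 rather than proving itself. No gaps; this matches the standard approach.
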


Let $T$ be a tree on $n$ vertices. If $|C_d(T)|=2$ and $C_d(T)=\{u,v\}$, then $n$ must be even and $\omega (u)=\omega(v)=\frac{n}{2}.$ Also, among the branches at $u$ (respectively, at $v$), the branch containing $v$ (respectively, $u$) has the maximum number of edges. If $n\geq 3$, then observe that neither the center nor the centroid of $T$ contain pendant vertices.
In general, there is no relation between the center and the centroid of a tree with regard to the number of vertices or to their location.

Like center and centroid, many researchers have defined middle part of a tree in several other ways. In \cite{zel}, Zelinka defined the notion `median' and proved that it coincides with the centroid for a tree. In \cite{mit}, Mitchell defined the `telephone center' of a tree and proved that it also coincides with the centroid. In 2005, Szekely and Wang defined in \cite{sw} a new middle part, the so called `subtree core', of a tree which does not coincide with either the center or the centroid in general.

Let $\mathbb{N}$ be the set of natural numbers. For a given tree $T$, let $f_T:V\rightarrow\mathbb{N}$ be the function defined by $v\mapsto f_T(v)$, where $f_T(v)$ is the number of subtrees of $T$ containing $v$. Then the \textit{subtree core} of $T$, denoted by $S_c=S_c(T)$, is defined as the set of all vertices $v$ for which $f_T(v)$ is maximum.

In the spirit of Theorems \ref{thm-center} and \ref{thm-centroid}, Szekely and Wang proved the following result in \cite[Theorem 9.1]{sw}.

\begin{theorem}\label{thm-score}\cite{sw}
The subtree core of a tree consists of either one vertex or two adjacent vertices.
\end{theorem}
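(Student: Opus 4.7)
The plan is to mimic the proofs of Theorems \ref{thm-center} and \ref{thm-centroid}: first establish a tool to compare $f_T$ at adjacent vertices, then show that any two vertices of $S_c(T)$ must in fact be adjacent. Since a tree contains no triangle, this forces $|S_c(T)|\leq 2$, which is exactly the claim.

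The first ingredient is an \emph{edge-split identity}. For any edge $uv\in E(T)$, deleting $uv$ splits $T$ into components $T_u\ni u$ and $T_v\ni v$; set $f^+(u,v):=f_{T_u}(u)$ and $f^+(v,u):=f_{T_v}(v)$. A subtree of $T$ through $u$ is uniquely a subtree of $T_u$ through $u$, optionally extended across $uv$ by a subtree of $T_v$ through $v$, so
$$f_T(u)=f^+(u,v)\bigl(1+f^+(v,u)\bigr),\qquad f_T(v)=f^+(v,u)\bigl(1+f^+(u,v)\bigr).$$
In particular $f_T(u)-f_T(v)=f^+(u,v)-f^+(v,u)$: comparing $f_T$ across an edge reduces to comparing directed subtree counts on the two sides.

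Now I proceed by contradiction. Suppose $u,w\in S_c(T)$ with $d_T(u,w)=k\geq 2$, and let $u=v_0,v_1,\ldots,v_k=w$ be the unique $u-w$ path. Deleting the $k$ path-edges leaves components $H_0,\ldots,H_k$ with $v_i\in H_i$; set $m_i=f_{H_i}(v_i)\geq 1$. A subtree through $v_1$ in the $v_1$-component of $T-v_0v_1$ covers a prefix $v_1\cdots v_q$ of the ``spine'' $v_1v_2\cdots v_k$ and, on each $H_j$ visited, chooses a subtree through $v_j$, giving
$$f^+(v_0,v_1)=m_0,\qquad f^+(v_1,v_0)=m_1\bigl(1+m_2+m_2m_3+\cdots+m_2m_3\cdots m_k\bigr).$$
Since $v_0\in S_c(T)$, the edge-split identity at $v_0v_1$ forces $m_0\geq f^+(v_1,v_0)\geq m_1(1+m_k)\geq 1+m_k$, using $m_i\geq 1$ and bounding the sum from below by $1+m_2m_3\cdots m_k$. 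Symmetrically at the edge $v_{k-1}v_k$, $m_k\geq 1+m_0$. Adding the two yields $0\geq 2$, a contradiction, so $d_T(u,w)\leq 1$.

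I expect the main obstacle to be the bookkeeping in the third step: setting up the hanging-branch decomposition and writing down a clean closed form for $f^+(v_1,v_0)$ (and its symmetric counterpart). Once the formula is in hand the contradiction is extracted using only $m_i\geq 1$, so no delicate estimation is required. The final conclusion $|S_c(T)|\leq 2$ is immediate because $S_c(T)$ is then a pairwise adjacent set in the triangle-free graph $T$, i.e.\ a clique of size at most two.
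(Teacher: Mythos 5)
Your proof is correct, but it takes a somewhat different route from the one the paper points to. The paper does not prove Theorem \ref{thm-score} itself; it cites Sz\'ekely and Wang and records that their argument rests on the strict concavity of $f_T$ (Lemma \ref{concave}): $2f_T(v)>f_T(u)+f_T(w)$ whenever $u,v,w$ are consecutive on a path, so the values of $f_T$ along the $u$--$w$ path form a strictly concave sequence, which cannot attain its maximum at two vertices at distance at least $2$. You bypass the concavity lemma entirely. Your edge-split identity $f_T(u)-f_T(v)=f^+(u,v)-f^+(v,u)$ is the same basic tool that underlies Lemma \ref{concave}, but instead of applying it locally at each interior vertex you write out the full hanging-branch decomposition $H_0,\dots,H_k$ of the path and use only the two inequalities coming from the extreme edges $v_0v_1$ and $v_{k-1}v_k$, which combine to the contradiction $m_0\geq 1+m_k$ and $m_k\geq 1+m_0$. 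Your closed form $f^+(v_1,v_0)=\sum_{q=1}^{k} m_1m_2\cdots m_q$ is right (the intersection of a connected subtree through $v_1$ with the spine is indeed a prefix), the lower bound by $m_1(1+m_k)$ using $m_i\geq 1$ is valid even in the boundary case $k=2$, and the final reduction to triangle-freeness of $T$ is immediate. What the concavity route buys is a reusable local lemma --- the paper invokes Lemma \ref{concave} repeatedly elsewhere, e.g.\ in the proof of Lemma \ref{lem-csc1} --- whereas your route gives a self-contained and arguably more elementary proof of this one theorem that never needs the full strict-concavity statement.
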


While proving the above theorem the authors used the fact that the function $f_T$ is strictly concave in the following sense.

\begin{lemma}\label{concave}
If $u,v,w$ are three vertices of a tree $T$ with $\{u,v\},\{v,w\}\in E(T)$, then $2f_T(v)-f_T(u)-f_T(w)>0.$
\end{lemma}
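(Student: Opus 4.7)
The plan is to express all three quantities through a branch decomposition at the central vertex $v$ and then see the positivity by direct algebra.

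Let $v_1=u,\ v_2=w,\ v_3,\ldots,v_k$ denote the neighbors of $v$, and let $T_i$ be the component of $T-v$ containing $v_i$. For each $i$, let $s_i$ be the number of subtrees of $T_i$ that contain $v_i$. The first fact I would use is the product formula
\[
f_T(v)=\prod_{i=1}^{k}(1+s_i),
\]
which holds because every subtree $S\ni v$ is obtained by independently deciding, for each branch, either to omit the edge $\{v,v_i\}$ or to include it together with some subtree of $T_i$ containing $v_i$.

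Writing $p=s_1$, $r=s_2$, and $M=\prod_{i=3}^{k}(1+s_i)$ (with the convention $M=1$ when $k=2$), this reads $f_T(v)=(1+p)(1+r)M$. I would next compute $f_T(u)$ by partitioning the subtrees through $u$ according to whether they also contain $v$: those avoiding $v$ are exactly the subtrees of $T_1$ through $u$, contributing $p$, while those containing $v$ split canonically as a subtree of $T_1$ through $u$ paired with a subtree of $T-V(T_1)$ through $v$ (which uses only the branches $T_2,\ldots,T_k$), contributing $p(1+r)M$. Hence $f_T(u)=p\bigl(1+(1+r)M\bigr)$, and by the symmetric argument $f_T(w)=r\bigl(1+(1+p)M\bigr)$.

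Substituting into $2f_T(v)-f_T(u)-f_T(w)$ and expanding, I expect the $prM$-terms to cancel, leaving the clean expression $2M+(p+r)(M-1)$. Since $p,r\geq 1$ (each $T_i$ contains at least the singleton $\{v_i\}$) and $M\geq 1$ (a product of positive integers, or empty), this is at least $2$, which proves strict positivity. The only thing to watch is the boundary case $k=2$, when $v$ has no neighbors besides $u$ and $w$; the convention $M=1$ handles it uniformly, and the bound is still $\geq 2$. The main obstacle I anticipate is simply keeping the branch bookkeeping straight; once the product formula for $f_T(v)$ is in place, the rest is mechanical expansion.
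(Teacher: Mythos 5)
Your argument is correct, and it is worth noting that the paper itself gives no proof of this lemma: it is stated as a known fact from Sz\'ekely and Wang \cite{sw}, so there is nothing in the text to match your approach against. Your computation checks out: with $p=s_1$, $r=s_2$, $M=\prod_{i\ge 3}(1+s_i)$ one has $f_T(v)=(1+p)(1+r)M$, $f_T(u)=p\bigl(1+(1+r)M\bigr)$, $f_T(w)=r\bigl(1+(1+p)M\bigr)$, and expanding gives
\[
2f_T(v)-f_T(u)-f_T(w)=2M+(p+r)(M-1)\ge 2,
\]
since $p,r\ge 1$ and $M\ge 1$; the splitting of subtrees through $u$ into those avoiding $v$ (counted by $p$) and those containing $v$ (counted by $p(1+r)M$) is legitimate because a subtree containing both $u$ and $v$ restricts to a connected subtree of $T_1$ through $u$ and a connected subtree of $T-V(T_1)$ through $v$, and conversely any such pair glues along the edge $\{u,v\}$. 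For comparison, the argument in \cite{sw} is usually presented differently: writing $f_T(v)-f_T(u)$ as the difference between the number of subtrees containing $v$ but not $u$ and those containing $u$ but not $v$, one reduces the claim to two strict inequalities of the form $f_B(v)>f_{A'}(w)$ obtained by an explicit injection (extend a subtree of the far component by the connecting edge), which avoids the product formula entirely. Your version buys an explicit positive lower bound of $2$ and a completely mechanical verification; the injection version is shorter and generalizes more readily to weighted counts. Either is acceptable here.
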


We shall use the above lemma frequently, mostly without mention.

\begin{remark}\label{rem-psc}
Like the center and centroid, for any tree $T$ on $n\geq 3$ vertices, $S_c(T)$ does not contain any pendant vertex.
\end{remark}

This remark can be seen as follows. Let $v$ be a pendant vertex of $T$ and let $\{v,w\}\in E(T)$. Consider the tree $T'=T-v.$ There is only one subtree of $T$, namely $\{v\}$, containing $v$ but not $w$. The number of subtrees of $T$ containing both $v$ and $w$ is equal to $f_{T'}(w)$. So $f_T(v)=1+f_{T'}(w)$. A similar argument gives $f_T(w)=2f_{T'}(w)$. Since $n\geq 3$, we have $f_{T'}(w)\geq 2$ and hence $f_T(w)>f_T(v)$.

We denote by $P_n$ the path on the $n$ vertices $1,2,\cdots,n$, where $1$ and $n$ are pendant vertices, and for $i=2,3,\cdots, n-1,$ vertex $i$ is adjacent to vertices $i-1$ and $i+1$. The center, centroid and subtree core coincide for a path. More precisely, for $n=2m$, we have $$C(P_{2m})=C_d(P_{2m})=S_c(P_{2m})=\{m,m+1\}$$
and for $n=2m+1$, we have
$$C(P_{2m+1})=C_d(P_{2m+1})=S_c(P_{2m+1})=\{m+1\}.$$
We denote by $K_{1,n-1}$ the star on the $n$ vertices $1,2,\cdots,n$, where $n$ is the only non-pendant vertex. Then
$$C(K_{1,n-1})=C_d(K_{1,n-1})=S_c(K_{1,n-1})=\{n\}.$$
We now give an example of a tree in which the center, centroid and subtree core are pair-wise different.

\begin{figure}[!ht]
 \includegraphics[scale=1]{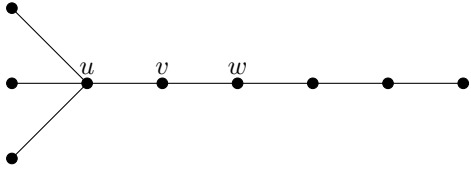}
\caption{Tree with different center, centroid and subtree core}\label{fig:1}
\end{figure}

\begin{example}\label{exp-ccs}
Consider the tree $T$ on 9 vertices as in Figure \ref{fig:1}. We have $e(w)=3$ and the eccentricity of any vertex other than $w$ is at least $4.$ So $C(T)=\{w\}.$ We have $\omega(v)=4$ and the weight of any vertex other than $v$  is at least $5$. So $C_d(T)=\{v\}.$ Finally, $f_T(u)=48$ and $f_T(x)<48$ for any vertex $x$ other than $u$. So $S_c(T)=\{u\}.$
\end{example}

For a given tree $T$, we denote by $d_T(C,C_d)$ (respectively, $d_T(C_d,S_c)$, $d_T(C,S_c)$) the distance between the center and the centroid (respectively, the centroid and the subtree core, the center and the subtree core) of $T$. It is clear that the minimum of $d_T(C,C_d)$ (respectively, $d_T(C_d,S_c)$, $d_T(C,S_c)$) among all trees $T$ on $n$ vertices is zero. The maximum of $d_T(C,C_d)$ among all trees $T$ on $n$ vertices has been studied in \cite{klp}, which we describe below.

\subsection{Path-star trees}

Let $P_{n-g,g}, \; n\geq 2, \; 1\leq g \leq n-1$, denote the tree on $n$ vertices which is obtained from the path $P_{n-g}$ by adding $g$ pendant vertices to the vertex $n-g$ (see
Figure \ref{fig:2}). Such a tree $P_{n-g,g}$ is called a {\em path-star tree}.
\begin{figure}[!ht]
 \includegraphics[scale=0.7]{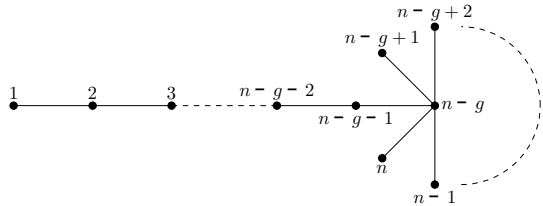}
\caption{Path-star tree}\label{fig:2}
\end{figure}

Note that $P_{n-1,1}$ is a path, and $P_{1,n-1}$ and $P_{2,n-2}$ are stars.  Any tree on less than or equal to $4$ vertices is a star or a path. The
exact location of the center, the centroid and the subtree core of paths and stars have already been mentioned. Therefore, for a part-star tree, we assume throughout that
$$n\geq 5 \text{ and } 2\leq g\leq n-3.$$
We denote by $\Gamma_n$ the class of all path-star trees $P_{n-g,g}$ with the above restrictions on $n$ and $g$. Then $|\Gamma_n|=n-4.$
For the tree $P_{3,n-3}$, we have $C(P_{3,n-3})=\{2,3\}$ and $C_d(P_{3,n-3})=\{3\}=S_c(P_{3,n-3})$. Hence
$$d_{P_{3,n-3}}(C,C_d)=d_{P_{3,n-3}}(C_d,S_c)=d_{P_{3,n-3}}(C,S_c)=0.$$
Any tree $T_5$ on $5$ vertices is either a path, or a star, or isomorphic to $P_{3,2}$. Therefore, $d_{T_5}(C,C_d)=d_{T_5}(C_d,S_c)=d_{T_5}(C,S_c)=0.$

In \cite[Theorems 2.3, 3.5]{klp}, the following results are obtained regarding the maximum distance between the center and the centroid among all trees on $n$ vertices.

\begin{theorem}\label{thm:cc1}\cite{klp}
Among all trees in $\Gamma_n$, the distance between the center and the centroid is maximized when $g=\left\lfloor\frac{n}{2}\right\rfloor$. If $T$ is a tree on $n\geq 5$ vertices, then
\begin{enumerate}
\item[(1)] $d_{P_{n-g,g}}(C,C_d)\geq d_T(C,C_d)$ for some $2\leq g\leq n-3$.
\item[(2)] $d_T(C,C_d)\leq \left\lfloor \frac{n-3}{4}\right\rfloor.$
\end{enumerate}
\end{theorem}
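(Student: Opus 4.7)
The plan is to split the argument into an explicit computation on the path-star class $\Gamma_n$ and a reduction of the general case to that class. For $P_{n-g,g}$ I would label the path portion $1,2,\dots,n-g$ with pendant leaves $p_1,\dots,p_g$ attached to the hub $n-g$. The diameter equals $n-g$, realized by the path from vertex~$1$ to any $p_j$, so using the comment following Theorem~\ref{thm-center} the center consists of the middle one or two vertices of that path: vertex $\tfrac{n-g}{2}+1$ when $n-g$ is even and vertices $\tfrac{n-g+1}{2}$, $\tfrac{n-g+1}{2}+1$ when $n-g$ is odd. For the centroid I would evaluate $\omega(i)=\max(i-1,n-i)$ for $2\leq i\leq n-g-1$ together with $\omega(n-g)=n-g-1$ and $\omega(p_j)=n-1$; minimizing, $C_d(P_{n-g,g})$ equals the natural midpoint $\lfloor\tfrac{n+1}{2}\rfloor$ (possibly with its neighbour) when $g$ is small, and jumps to the hub $n-g$ once $g$ crosses the threshold where $\omega(n-g)=n-g-1$ beats $\omega(n-g-1)=\max(n-g-2,g+1)$. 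With these closed forms, $d_{P_{n-g,g}}(C,C_d)$ is piecewise-linear in $g$; a direct case split on $n\bmod 4$ then shows it is non-decreasing for $g\leq\lfloor n/2\rfloor$ and non-increasing after, attaining the value $\lfloor\tfrac{n-3}{4}\rfloor$ precisely at $g=\lfloor n/2\rfloor$.

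For the general bound I would fix an arbitrary tree $T$ on $n$ vertices, pick a diameter path $P=v_0v_1\cdots v_D$ of $T$, and recall that $C(T)$ lies at the middle of~$P$. If $T$ is not already a path-star tree there is a leaf $u$ of $T$ outside $P$; the plan is to detach $u$ and re-attach it as a new leaf at $v_{D-1}$, the penultimate vertex on the end of~$P$ opposite $C_d(T)$. I would check that this move (i) preserves the diameter, so $C(T)$ remains where it was on~$P$, and (ii) either fixes the centroid or shifts it by one step toward $v_{D-1}$, by a direct comparison of $\omega$ at adjacent vertices of $P$ before and after the move. Iterating reduces $T$ to some $P_{n-g,g}$ with weakly larger $d(C,C_d)$, proving part~(1), and combining with the path-star computation yields the universal bound in part~(2).

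The main obstacle is verifying step~(ii): a careless move can shift the centroid in the wrong direction or push it across the center. The cleanest handling is to move one leaf at a time, choosing the deepest available leaf in any off-path subtree so that only one vertex's weight decreases, and then to split cases on the parity of~$D$, on whether $|C_d|$ is $1$ or $2$, and on the location of $C_d$ relative to $u$'s old attachment point. The bulk of the work is this bookkeeping, together with checking that at every intermediate tree the weight function $\omega$ remains monotone along~$P$ so that the centroid cannot overshoot~$C$.
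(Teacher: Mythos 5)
First, a point of context: the paper does not prove this statement at all --- it is quoted verbatim from \cite{klp} (Theorems 2.3 and 3.5 there), so there is no in-paper proof to compare against. Judged on its own merits, your proposal has the right overall architecture (an explicit closed-form analysis on $\Gamma_n$ plus a leaf-relocation reduction of a general tree to a path-star tree), and your computation of $C(P_{n-g,g})$ and $C_d(P_{n-g,g})$ agrees with Theorems \ref{remark2} and \ref{remark4} of the paper; that half is sound, modulo the case-checking you defer.

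The reduction step, however, contains a genuine directional error. You detach a leaf $u$ and reattach it at $v_{D-1}$, \emph{the penultimate vertex on the end of $P$ opposite $C_d(T)$}, and you claim the move fixes the centroid or shifts it one step toward $v_{D-1}$. But the center stays pinned at the middle of $P$ (the diameter is preserved, as you note), so a shift of the centroid \emph{toward} $v_{D-1}$ is a shift toward and eventually past the center: $d(C,C_d)$ strictly decreases at such steps, and your iteration gives no monotonicity. Concretely, if $T$ is already a broom with all excess leaves at $v_1$ and centroid near $v_0$, your procedure ferries those leaves one at a time to $v_{D-1}$; midway through, the mass is balanced, the centroid sits at the center, and $d(C,C_d)=0$, so the ``weakly larger at each step'' claim is false. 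The correct move is the opposite one --- accumulate the relocated leaves on the \emph{same} side as the centroid (e.g.\ at the centroid vertex itself, or at the penultimate vertex of the end of $P$ nearest $C_d$), so that the centroid is held fixed or pushed away from the center while the center stays at the middle of $P$. This is exactly the mechanism the present paper uses in the analogous reductions (Theorems \ref{thm-csc} and \ref{thm-cdsc}), where pendants are always reattached at the subtree-core vertex $v_2$ so that Lemma \ref{lem-csc1} keeps $S_c$ pinned there. Your closing paragraph correctly identifies ``pushing the centroid across the center'' as the danger, but the bookkeeping you propose (deepest leaf first, parity cases) cannot repair a move that is aimed at the wrong end of the path; the attachment site itself must change.
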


In this paper, we study the problem of maximizing the distances $d_T(C,S_c)$ and $d_T(C_d,S_c)$ among all trees $T$ on $n$ vertices, in which path-star trees would also play an important role. More precisely, we prove the following.

\begin{theorem}\label{main-1}
Let $T$ be a tree on $n\geq 5$ vertices and let $g_0$ be the smallest positive integer such that $2^{g_0} + g _0> n - 1$. Then 
\begin{enumerate}
\item[(i)] $d_T(C,S_c)\leq \lfloor \frac{n-g_0}{2}\rfloor -1$.
\item[(ii)] $d_T(C_d,S_c)\leq \lfloor \frac{n-1}{2} \rfloor -g_o.$
\end{enumerate}
Further, these bounds are attained by the path-star tree $P_{n-g_0,g_0}$.
\end{theorem}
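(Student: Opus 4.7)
The plan splits into two stages, paralleling the abstract: first compute the center, centroid, and subtree core of a general path-star tree and identify $g_0$ as the optimizer, then reduce the general problem to the path-star case.

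\textbf{Stage 1 (path-star computation).} I will label the main path of $P_{n-g,g}$ as $v_1,\ldots,v_m$ with $m=n-g$ and the pendants at $v_m$ as $p_1,\ldots,p_g$. The diameter is $n-g$ (realized by $v_1$-to-$p_j$), so $C$ consists of its middle vertex(es), giving $d(v_m,C)=\lfloor(n-g)/2\rfloor$. The weights $\omega(v_i)=\max(i-1,n-i)$ for $i<m$ and $\omega(v_m)=m-1$ place $C_d$ near the middle of the $n$-vertex tree so that $d(v_m,C_d)=\lfloor(n-1)/2\rfloor-g$ whenever $g\le(n-3)/2$. For the subtree core I enumerate each subtree through $v_i$ as an interval $\{v_a,\ldots,v_b\}\ni v_i$ together, when $b=m$, with an arbitrary subset of the pendants:
\[f_T(v_i)=i\bigl(n-g-i+2^g\bigr),\qquad f_T(p_j)=1+m\cdot 2^{g-1}.\]
The concave quadratic $i\mapsto i(m-i+2^g)$ attains its maximum on $\{1,\ldots,m\}$ at $i=m$ iff $2^g\ge n-g$, equivalently $g\ge g_0$. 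Hence $S_c(P_{n-g_0,g_0})=\{v_m\}$, and the two distances collapse to $\lfloor(n-g_0)/2\rfloor-1$ and $\lfloor(n-1)/2\rfloor-g_0$, matching (i) and (ii). A short unimodality check (for $g<g_0$ the subtree core moves into the interior, for $g>g_0$ the diameter shrinks) confirms $g_0$ as the maximizer over $\Gamma_n$.

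\textbf{Stage 2 (upper bound for arbitrary trees).} Take any tree $T$ on $n$ vertices with $c\in C(T)$, $c_d\in C_d(T)$, $s\in S_c(T)$. For (i) set $d=d_T(c,s)$ and label the path $c=v_0,\ldots,v_d=s$; for (ii) replace $c$ by $c_d$. Let $B_i=\prod_{T'}(1+f_{T'}(\mathrm{root}))$ over the branches at $v_i$ off the path, so that $f_T(v_i)=(1+\alpha_i)(1+\beta_i)B_i$ with $\alpha_{i+1}=(1+\alpha_i)B_i$, $\beta_{i-1}=(1+\beta_i)B_i$, $\alpha_0=\beta_d=0$. In the ``straight'' case $B_1=\cdots=B_{d-1}=1$, the inequality $f_T(s)\ge f_T(v_{d-1})$ collapses to $B_s\ge d+B_{v_0}$. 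Since $B_s\le 2^\gamma$ where $\gamma$ counts off-path vertices at $s$, and $B_{v_0}\ge n-d-\gamma$ (attained when the off-path part at $v_0$ is a single path), this yields $2^\gamma\ge n-\gamma$, i.e., $\gamma\ge g_0$. For (i), the center condition on $c$ forces an off-path branch at $c$ of depth $\ge d+1$, contributing $\ge d+1$ additional vertices, so $n\ge 2d+2+\gamma\ge 2d+2+g_0$ and $d\le\lfloor(n-g_0)/2\rfloor-1$. For (ii), the centroid condition bounds the branch at $c_d$ toward $v_1$ by $\lfloor(n-1)/2\rfloor$ edges (in the $|C_d|=1$ case directly from $\omega(c_d)\le\lfloor(n-1)/2\rfloor$; in the $|C_d|=2$ case because the $n/2$-edge maximal branch at $c_d$ points toward the other centroid, not toward $s$), and since this branch has $d+\gamma$ edges, $d\le\lfloor(n-1)/2\rfloor-g_0$.

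\textbf{Main obstacle.} The hardest step will be the reduction to the ``straight'' case with no side branches at interior path vertices. The plan is a grafting move: detach a side subtree at some $v_i$ ($1\le i\le d-1$) and reattach it as pendants at $s$. I will need to verify that this (a) keeps $s$ in the subtree core, since every factor contributing to $B_s$ grows while the $\alpha_j,\beta_j$ on $v_{d-1}$'s side decrease; (b) does not pull $C$ or $C_d$ closer to $s$, which needs a case split according to whether the detached subtree contributed to the diameter or dominated a centroid weight; and (c) preserves $n$ (immediate). Handling (b) simultaneously for both the center and centroid versions, especially when a deep side subtree is moved, is where I expect most of the technical work to go; once the reduction is settled, Stage 1 together with the arithmetic in Stage 2 yields both bounds and the attainment at $P_{n-g_0,g_0}$.
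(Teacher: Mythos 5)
Your Stage 1 is essentially the paper's Sections 2--3 computations (Theorems 2.4, 2.5, 3.3 and the distance formulas): the count $f(i)=i(n-g-i)+i2^g$, the location of $S_c$, and the identification of $g_0$ all match, and this part is fine (though the ``short unimodality check'' over $g$ is where the paper spends Propositions 2.7 and 3.6, including a parity case split and an integrality trick to upgrade $\geq -\tfrac12$ to $\geq 0$; it is not quite a one-liner). Your Stage 2 genuinely diverges from the paper: the paper reduces an arbitrary tree all the way to a path-star tree (Theorems 2.3 and 3.2, with Lemma 3.1 needed to straighten the branches on the far side of the centroid without disturbing $S_c$) and then only compares path-star trees, whereas you stop at a ``double broom'' and close with direct inequalities ($B_s\le 2^\gamma$, $B_{v_0}\ge n-d-\gamma$, Jordan's bound on the centroid weight). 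Your route is attractive because it never has to touch the structure hanging off $v_0$, which is precisely what forces the paper to introduce Lemma 3.1; if it works, it is arguably cleaner.

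However, as written there are two concrete gaps. First, the reduction to the straight case is exactly the content of the paper's Lemma 2.2 together with the center/centroid-tracking arguments in Theorems 2.3 and 3.2; you correctly identify it as the main obstacle but supply no proof, and the subtree-core part alone (your item (a)) requires the computation in Lemma 2.2, which is not a formality. Second, your closing arithmetic has off-by-one slack that fails to give the stated bound in some parity cases. The condition $s\in S_c$ gives only $f_T(s)\ge f_T(v_{d-1})$, which in your notation is $B_s\ge d-1+B_{v_0}$, not $B_s\ge d+B_{v_0}$; this yields $2^\gamma+\gamma\ge n-1$ rather than $>n-1$, so $\gamma\ge g_0$ can fail by one at equality. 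Likewise, when $|C|=2$ the longest path has odd length and the centrality of $c$ only forces a far-side branch of depth $\ge d$, giving $n\ge 2d+1+\gamma$ and hence $d\le\lfloor(n-g_0-1)/2\rfloor$, which exceeds $\lfloor(n-g_0)/2\rfloor-1$ by one when $n-g_0$ is odd. Both slippages occur exactly at configurations where $S_c$ or $C$ has two elements and the distance is measured to the nearer one, so they are presumably repairable by a careful two-vertex case analysis, but the proof is not complete without it.
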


\section{Center and subtree core}

For given vertices $v_1,v_2,\cdots,v_k$ in a tree $T$, we denote by $f_T(v_1,v_2,\cdots,v_k)$ the number of subtrees of $T$ containing $v_1,v_2,\cdots,v_k$.

\begin{lemma}\label{lem-elem}
Let $T$ be a tree and $w,y\in V(T)$, where $y$ is a pendant vertex not adjacent to $w$. Let $\widetilde{T}$ be the tree obtained by detaching $y$ from $T$ and adding it as a pendant vertex adjacent to $w$. Then
$f_{\widetilde{T}}(a)=f_T(a)-f_{T}(a,y)+f_T(a,w)-f_{T}(a,w,y)$ for any $a\in V(T-y)$.
\end{lemma}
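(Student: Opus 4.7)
The plan is to prove this by partitioning subtrees of $T$ containing $a$ according to whether or not they contain $w$ and whether or not they contain $y$, doing the same for $\widetilde{T}$, and then setting up bijections between the corresponding classes. Let $A_{ij}$ (for $i,j \in \{0,1\}$) denote the number of subtrees of $T$ containing $a$, such that the subtree contains $w$ iff $i=1$ and contains $y$ iff $j=1$, and analogously define $B_{ij}$ for $\widetilde{T}$. Then $f_T(a)=A_{00}+A_{01}+A_{10}+A_{11}$ and $f_{\widetilde{T}}(a)=B_{00}+B_{01}+B_{10}+B_{11}$, and by inclusion-exclusion the quantities $A_{ij}$ can be expressed in terms of $f_T(a), f_T(a,w), f_T(a,y), f_T(a,w,y)$ (for instance, $A_{00}=f_T(a)-f_T(a,w)-f_T(a,y)+f_T(a,w,y)$ and $A_{10}=f_T(a,w)-f_T(a,w,y)$).

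The first key observation is that $T-y=\widetilde{T}-y$ as graphs, since detaching a pendant vertex and reattaching it elsewhere does not change the induced subgraph on the remaining vertices. Therefore the subtrees of $T$ containing $a$ but not $y$ are in natural bijection with those of $\widetilde{T}$ containing $a$ but not $y$, broken further by whether they contain $w$. This yields $B_{00}=A_{00}$ and $B_{10}=A_{10}$.

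The second key observation concerns subtrees of $\widetilde{T}$ that contain $y$. Since $y$ is a pendant vertex in $\widetilde{T}$ whose unique neighbor is $w$, any subtree of $\widetilde{T}$ that contains $y$ and at least one other vertex (which is forced here since it must contain $a\ne y$) must contain the edge $\{w,y\}$, and in particular must contain $w$. This gives $B_{01}=0$. Moreover, removing $y$ from any subtree of $\widetilde{T}$ containing $\{a,w,y\}$ gives a subtree of $\widetilde{T}-y=T-y$ containing $\{a,w\}$, and this correspondence is a bijection with inverse given by adjoining $y$ and the edge $\{w,y\}$. Hence $B_{11}$ equals the number of subtrees of $T-y$ containing both $a$ and $w$, which is exactly $A_{10}$.

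Combining the four identities gives $f_{\widetilde{T}}(a)=A_{00}+A_{10}+0+A_{10}=A_{00}+2A_{10}$, and substituting the inclusion-exclusion expressions simplifies to $f_T(a)-f_T(a,y)+f_T(a,w)-f_T(a,w,y)$, as required. There is no real obstacle here beyond careful bookkeeping; the only step requiring any thought is verifying the bijection in the case where the subtree of $\widetilde{T}$ contains $y$, and this follows cleanly from the fact that $y$ is pendant in $\widetilde{T}$ with unique neighbor $w$.
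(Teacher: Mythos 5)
Your proof is correct and follows essentially the same route as the paper: both hinge on the observations that $T-y=\widetilde{T}-y$ and that subtrees of $\widetilde{T}$ containing $y$ correspond bijectively to subtrees of $T-y$ containing $w$; your four-class partition $A_{ij}$, $B_{ij}$ is just a finer bookkeeping of the paper's identity $f_{\widetilde{T}}(a)=f_{T-y}(a)+f_{T-y}(a,w)$ followed by the same inclusion-exclusion.
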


\begin{proof}
Write $T'=T-y=\widetilde{T}-y$. Observe that, for any $a\in V(T')$, the number of subtrees of $\widetilde{T}$ containing $a$ but not $y$ is equal to $f_{T'}(a)$, and the number of subtrees of $\widetilde{T}$ containing both $a$ and $y$ is equal to $f_{T'}(a,w)$. So
\begin{equation}\label{eq1}
f_{\widetilde{T}}(a)=f_{T'}{(a)} + f_{T'}(a,w).
\end{equation}
The set of subtrees of $T$ containing $a$ is a disjoint union of the subtrees of $T$ containing $a$ but not $y$, and the subtrees of $T$ containing both $a$ and $y$. This gives
\begin{equation}\label{eq2}
f_{T'}(a)=f_T{(a)}-f_{T}{(a,y)},
\end{equation}
since $f_{T'}(a)$ is equal to the number of subtrees of $T$ containing $a$ but not $y$. Similarly, we get
\begin{equation}\label{eq3}
f_{T'}(a,w)=f_T{(a,w)}-f_{T}{(a,w,y)}.
\end{equation}
Then $f_{\widetilde{T}}(a)=f_T(a)-f_{T}(a,y)+f_T(a,w)-f_{T}(a,w,y)$, follows from (\ref{eq1}), (\ref{eq2}) and (\ref{eq3}).
\end{proof}

The following lemma compares the subtree core of two trees when one is obtained from the other by some graph perturbation.

\begin{lemma}\label{lem-csc1}
Let  $T$ be a tree, $v\in S_c(T)$ and $y$ be a pendant vertex of $T$ not adjacent to $v$. If $\widetilde{T}$ is the tree obtained by detaching $y$ from $T$ and adding it as a pendant vertex adjacent to $v$, then $S_c(\widetilde{T})=\{v\}$.
\end{lemma}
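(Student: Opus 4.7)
The strategy is to show that $f_{\widetilde{T}}$ attains a strict local maximum at $v$. Combined with Theorem~\ref{thm-score} (so $S_c(\widetilde{T})$ consists of at most two adjacent vertices) and Lemma~\ref{concave} (which forces $f_{\widetilde{T}}$ to keep decreasing once it starts decreasing along any path from $v$), this immediately gives $S_c(\widetilde{T}) = \{v\}$.

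The first step is to apply Lemma~\ref{lem-elem} with $w = v$ to obtain
\[
f_{\widetilde{T}}(a) = f_T(a) - f_T(a,y) + f_T(a,v) - f_T(a,v,y)
\]
for every $a \in V(T-y)$, and specialize to $a = v$ to get $f_{\widetilde{T}}(v) = 2 f_T(v) - 2 f_T(v,y)$. The heart of the argument is then to prove $f_{\widetilde{T}}(v) > f_{\widetilde{T}}(u)$ for every neighbor $u$ of $v$ in $\widetilde{T}$. The new pendant $u = y$ is handled by Remark~\ref{rem-psc}. For any neighbor $u$ already present in $T$, I would delete the edge $\{u,v\}$, obtaining components $T_u \ni u$ and $T_v \ni v$, and split on the location of $y$.

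If $y \in T_u$, every subtree containing both $v$ and $y$ passes through $u$, so $f_T(v,y) = f_T(u,v,y)$. Routine simplification, using $f_T(v) - f_T(u,v) = f_{T_v}(v) \geq 1$ and $f_T(u,y) - f_T(u,v,y) = f_{T_u}(u,y) \geq 1$, gives
\[
f_{\widetilde{T}}(v) - f_{\widetilde{T}}(u) = f_{T_v}(v) + f_{T_u}(u,y) + \bigl(f_T(v) - f_T(u)\bigr) > 0,
\]
where the last parenthesis is nonnegative because $v \in S_c(T)$.

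The case $y \in T_v$ is the main obstacle. Analogous bookkeeping reduces the target to the inequality $f_{T_v}(v) > 2 f_{T_v}(v,y)$. My plan is to split the subtrees of $T_v$ containing $v$ by whether they contain $y$, writing $f_{T_v}(v) = f_{T_v - y}(v) + f_{T_v}(v, y)$, so the required inequality becomes $f_{T_v - y}(v) > f_{T_v}(v, y)$. The map $S \mapsto S \setminus \{y\}$ is a bijection between subtrees of $T_v$ containing $\{v, y\}$ and subtrees of $T_v - y$ containing $\{v, y'\}$, where $y'$ is the unique neighbor of $y$. Because $y$ is not adjacent to $v$ we have $y' \neq v$, so the one-vertex subtree $\{v\}$ is a subtree of $T_v - y$ containing $v$ but not $y'$, which provides the strict slack. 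Assembling these pieces delivers strict local maximality at $v$, and hence $S_c(\widetilde{T}) = \{v\}$.
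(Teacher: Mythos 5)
Your proposal is correct and follows essentially the same route as the paper: reduce to the neighbours of $v$ via Remark~\ref{rem-psc} and Lemma~\ref{concave}, apply Lemma~\ref{lem-elem} with $w=v$, and split according to which component of $T$ minus the edge $\{u,v\}$ contains $y$. The only cosmetic difference is in the case $y\in T_v$, where the paper proves $f_{T_v}(v)-2f_{T_v}(v,y)>0$ by writing $2f_{T_v}(v,y)=f_{T_v}(v,y')$ directly, whereas you decompose $f_{T_v}(v)$ according to whether a subtree contains $y$ --- the same counting argument in a different order.
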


\begin{proof}
We show that $f_{\widetilde{T}}(v) > f_{\widetilde{T}}(b)$ for every $b\in V(\widetilde{T})\setminus\{v\}$. Let $u_1,u_2,\cdots,u_{k+1}=y$ be the vertices adjacent to $v$ in $\widetilde{T}$. As $y$ is a pendant vertex, Remark \ref{rem-psc} implies $y\notin S_c(\widetilde{T})$. Again, by Lemma \ref{concave}, it is enough to show that $f_{\widetilde{T}}(v) - f_{\widetilde{T}}(u_i) > 0$ for $1\leq i\leq k.$
Taking $a=u_i$ and $w=v$ in Lemma \ref{lem-elem}, we have
$$f_{\widetilde{T}}(u_i)=f_T(u_i)-f_{T}(u_i,y)+f_T(u_i,v)-f_{T}(u_i,v,y)$$
Again, taking $a=w=v$ in Lemma \ref{lem-elem}, we have
$$ f_{\widetilde{T}}(v)=f_T(v)-f_{T}(v,y)+f_T(v)-f_{T}(v,y).$$
Then, for $1\leq i\leq k$, we have
\begin{equation*}
f_{\widetilde{T}}(v) - f_{\widetilde{T}}(u_i) =[f_T(v)-f_T(u_i)]+f_T(v)-2f_T(v,y)+f_T(u_i,y)-f_T(u_i,v)+f_T(u_i,v,y).
\end{equation*}
Since $v\in S_c(T)$, $f_T(v)-f_T(u_i)\geq 0$. Note that here equality may happen for one $i$ if $S_c(T)=\{u_i,v\}$. Thus, it is enough to prove that
$$f_T(v)-2f_T(v,y)+f_T(u_i,y)-f_T(u_i,v)+f_T(u_i,v,y)>0,$$
for $i=1,2,\cdots,k.$

Let $A_i$ and  $B_i$ be the components of $T$ obtained by deleting the edge $\{v,u_i\}\in E(T)$. We may assume that $A_i$ contains $v$ and $B_i$ contains $u_i$.
Then
\begin{align}
f_T(v) & = f_{A_i}(v) + f_{A_i}(v)f_{B_i}(u_i)\label{eq-c-1}\\
f_T(u_i,v) & = f_{A_i}(v)f_{B_i}(u_i)\label{eq-c-2}
\end{align}
We shall consider two cases depending on whether $y\in A_i$ or $y\in B_i$.\\

\noindent\underline{Case 1:} $y\in A_i$. Here we have the following.
\begin{align*}
f_T(v,y) & = f_{A_i}(v,y) + f_{A_i}(v,y)f_{B_i}(u_i)\\
f_T(u_i,y) & = f_{A_i}(v,y)f_{B_i}(u_i)\\
f_T(u_i,v, y) & = f_{A_i}(v,y)f_{B_i}(u_i)
\end{align*}
Using the above three equations together with (\ref{eq-c-1}) and (\ref{eq-c-2}), we get
$$f_T(v)-2f_T(v,y)+f_T(u_i,y)-f_T(u_i,v)+f_T(u_i,v,y) = f_{A_i}(v) - 2f_{A_i}(v,y).$$
Let $y'$ be the (unique) vertex adjacent to $y$ in $T$. Then $y'\in A_i$ and $2f_{A_i}(v,y) = f_{A_i}(v,y').$ Therefore,
$f_{A_i}(v) - 2f_{A_i}(v,y)= f_{A_i}(v) - f_{A_i}(v,y')>0$, as $v\neq y'$.\\

\noindent\underline{Case 2:} $y\in B_i$. In this case, we have
\begin{align*}
f_T(v,y) & = f_{A_i}(v)f_{B_i}(u_i,y)\\
f_T(u_i,y) & = f_{B_i}(u_i,y) + f_{A_i}(v)f_{B_i}(u_i,y)\\
f_T(u_i,v, y) & = f_{A_i}(v)f_{B_i}(u_i,y)
\end{align*}
Using the above three equations together with (\ref{eq-c-1}) and (\ref{eq-c-2}), we get
$$f_T(v)-2f_T(v,y)+f_T(u_i,y)-f_T(u_i,v)+f_T(u_i,v,y) = f_{A_i}{(v)} + f_{B_i}{(u_i, y)} > 0.$$
This completes the proof.
\end{proof}

We now prove the following result which says that, among all trees on $n$ vertices, the distance between the center and the subtree core is maximized by a path-star tree.

\begin{theorem}\label{thm-csc}
Let $T$ be any tree on $n\geq 5$ vertices. Then there exists a path-star tree $P_{n-g,g}$, for some $g$, with $d_{P_{n-g,g}}(C, S_c)\geq d_T(C, S_c)$.
\end{theorem}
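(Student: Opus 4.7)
The plan is to reduce an arbitrary tree $T$ on $n \geq 5$ vertices to a path-star tree by iterating the pendant-relocation transformation of Lemma \ref{lem-csc1}, in a way that does not decrease $d_T(C, S_c)$. If $T$ is a path or a star then $d_T(C, S_c) = 0$ and $T$ itself is a path-star tree that verifies the claim, so assume $T$ is neither. Fix $v \in S_c(T)$; by Remark \ref{rem-psc}, $v$ is not a pendant. Let $u$ be a pendant of $T$ at maximum distance $\ell = e_T(v)$ from $v$ (in particular $\ell \geq 2$ since $T$ is not a star), and let $L$ be the $v$-to-$u$ path in $T$.

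I then iterate the following step: while there is a pendant $y \neq u$ of the current tree that lies off $L$ and is not adjacent to $v$, invoke Lemma \ref{lem-csc1} to detach $y$ and reattach it as a pendant of $v$. Every such step preserves each vertex of $L$, hence keeps $e(v) = \ell$ unchanged, and leaves $v$ as the unique subtree core. The number of edges lying off $L$ and not incident to $v$ strictly decreases at each step, so the procedure terminates; at termination every vertex is either on $L$ or a pendant adjacent to $v$, so the resulting tree is the path-star tree $T^{*} = P_{n-g, g}$ with $g = n - 1 - \ell$, having $v$ as its star center and $S_c(T^{*}) = \{v\}$. The bound $\ell \geq 2$ gives $g \leq n - 3$; the case $g \leq 1$ would make $T^{*}$ isomorphic to the path $P_n$ with $v$ at or adjacent to an endpoint, which contradicts $S_c(T^{*}) = \{v\}$ since the subtree core of $P_n$ is its middle. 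Hence $T^{*} \in \Gamma_n$, and a direct computation on $P_{n - g, g}$ yields
\begin{equation*}
d_{T^{*}}(C, S_c) = \left\lfloor \tfrac{\ell - 1}{2} \right\rfloor.
\end{equation*}

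The main obstacle is to prove the auxiliary inequality $d_T(c, v) \leq \lfloor (e_T(v) - 1)/2 \rfloor$, where $c \in C(T)$ denotes the center vertex closest to $v$. I would prove it by case analysis on whether $v$ lies on a chosen diameter path $P_D$ of $T$, using the standard lemma that a farthest vertex from any given vertex of a tree is an endpoint of a diameter. Because $v$ is non-pendant, it has a neighbor not on the $v$-to-$u$ path; extending that path by this neighbor at its $v$-end produces a path of length $\ell + 1$ in $T$, whence $e_T(v) + 1 \leq D$, where $D$ denotes the diameter of $T$. Expressing $d_T(v, c)$ in terms of $v$'s position on $P_D$ (or, in the off-path case, in terms of $v$'s attachment point to $P_D$ and the length of its off-path branch) and of the middle of $P_D$, the desired bound reduces in each case to $\lceil D/2 \rceil \geq \lceil (e_T(v) + 1)/2 \rceil$, which is immediate from $e_T(v) + 1 \leq D$.

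Combining the pieces,
\begin{equation*}
d_T(C, S_c) \leq d_T(c, v) \leq \left\lfloor \tfrac{e_T(v) - 1}{2} \right\rfloor = \left\lfloor \tfrac{\ell - 1}{2} \right\rfloor = d_{T^{*}}(C, S_c),
\end{equation*}
so $T^{*} = P_{n - g, g}$ is the path-star tree asserted by the theorem.
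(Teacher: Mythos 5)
Your proposal is correct, and while it uses the same engine as the paper --- iterated application of Lemma \ref{lem-csc1} to relocate pendant vertices onto a fixed subtree-core vertex $v$ until a path-star tree remains --- the way you compare the two distances is genuinely different. The paper carries \emph{both} $C$ and $S_c$ through two stages of transformations (first collapsing the branches at $v_2$ other than the one containing $C\cup S_c$, then collapsing that branch onto a longest path through $C$ and $S_c$) and argues at each stage that the center can only stay put or drift away from the subtree core; this center-tracking is the most delicate part of their argument. You decouple the two sides instead: the a priori bound $d_T(C,S_c)\leq d_T(v,C)=e_T(v)-\lceil diam(T)/2\rceil\leq\lfloor(e_T(v)-1)/2\rfloor$, valid for any non-pendant $v\in S_c(T)$ because $diam(T)\geq e_T(v)+1$, together with the exact value $\lfloor(e_T(v)-1)/2\rfloor$ for the spider you build on the $v$-to-farthest-leaf path. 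This buys a cleaner logical structure (no need to monitor where the center goes under each perturbation) and yields the explicit intermediate inequality $d_T(C,S_c)\leq\lfloor(e_T(v)-1)/2\rfloor$ as a bonus; the paper's route buys uniformity, since the identical bookkeeping is reused for the centroid in Theorem \ref{thm-cdsc}, where no such eccentricity formula is available. Two small points you should tighten: the displayed equality $d_{T^*}(C,S_c)=\lfloor(\ell-1)/2\rfloor$ presupposes $S_c(T^*)=\{v\}$, which Lemma \ref{lem-csc1} delivers only if at least one relocation is actually performed (if none is, $T$ is already a path-star tree and the theorem is immediate, so this costs nothing but should be said); and the auxiliary inequality, though only sketched, does go through --- writing $m$ for the projection of $v$ onto a diametral path $[p,\ldots,q]$ with $d(m,p)=a\leq b=d(m,q)$ and $h=d(v,m)$, one gets $d_T(v,C)=h+\lfloor D/2\rfloor-a=e_T(v)-\lceil D/2\rceil$, after which $D\geq e_T(v)+1$ finishes the computation.
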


\begin{proof}
We may assume that $d_T(C, S_c)\geq 1$. Let $C(T)=\{w_1, w_2\}$ and $S_c(T) = \{v_1, v_2\}$, where $w_1=w_2$ if $|C(T)|=1$ and $v_1=v_2$ if $|S_c(T)|=1$. We may also assume that $d_T(C, S_c)=d_T(w_2, v_1)$.

Let $B_1, B_2, \cdots, B_m$ be the branches of $T$ at $v_2$. Without loss, we may assume that $C(T), S_c(T) \subseteq V(B_1)$. For $ i\in\{2,3,\ldots,m\}$, if there are pendant vertices $y$ of $T$ contained in $B_i$ but not adjacent to $v_2$, then detach $y$ from $T$ and add it as a pendant vertex adjacent to $v_2$. It may happen that a given non-pendant vertex $x$ in $B_i$ becomes a pendant after deletion of certain pendant vertices, apply the same procedure to $x$ as well (we shall use this graph operation more frequently). Continue this process till all the vertices of $T$, not in $B_1$, are attached to $v_2$ as pendants. We denote by $\widetilde{T}$ the new tree obtained from $T$ in this way. By Lemma \ref{lem-csc1},  $S_c(\widetilde{T}) = \{v_2\}$. If the vertices of $T$, not in $B_1$, are all pendants, then we take $\widetilde{T}=T$ and proceed with $S_c(\widetilde{T})= \{v_1, v_2\}$.

We now study the position of $C(\widetilde{T}).$ We know that the center of a tree is the center of any longest path in it. If there is a longest path in $T$ which does not contain $v_2,$ then that path is contained in $B_1$. In that case, $diam(T)=diam(\widetilde{T})$ and hence $C(T)=C(\widetilde{T})$. Otherwise, every longest path in $T$ contains $v_2$. Then $diam(T)\geq diam(\widetilde{T})$ and so $C(\widetilde{T})$ may move away from $S_c(T)$ with respect to a path in $\widetilde{T}$ containing $C(T)$ and $v_2$. Therefore, $d_{\widetilde{T}}(C,S_c)\geq d_T(C,S_c)$.

If $\widetilde{T}$ is a path-star tree, then we are done. Otherwise, let $P$ be a longest path in $B_1$ containing both $C(\widetilde{T})$ and $S_c(\widetilde{T})$. We now proceed by detaching pendant vertices of $B_1$ but not in $P$, and add them as pendant vertices adjacent to $v_2$, one after another, till we are left with only the path $P$ in $B_1$. Call the new tree obtained in this way from $\widetilde{T}$ as $\overline{T}$. By Lemma \ref{lem-csc1}, $S_c(\overline{T}) = \{v_2\}$. Clearly, $\overline{T}$ is a path-star tree. Since $diam(\overline{T}) \leq diam(\widetilde{T})$, it follows that $d_{\overline{T}}(C,S_c) \geq d_{\widetilde{T}}(C,S_c)$. Thus, we have a path-star tree $\overline{T}$ on $n$ vertices with $d_{\overline{T}}(C, S_c) \geq d_{\widetilde{T}}(C, S_c) \geq d_T(C,S_c)$.
\end{proof}

We next try to find a relation between $n$ and $g$ for which the distance $d_{P_{n- g, g}}(C, S_c)$ is maximum. We first look for the position of the subtree core in any $P_{n - g, g}.$
For $1\leq i\leq n-g,$ we have
\begin{equation}\label{eq20}
f_{P_{n - g, g}}(i) = i(n-g-i) + i(2^g).
\end{equation}
Here the first term denotes the number of subtrees of $P_{n-g,g}$ containing the vertex $i$ but not $n-g$, while the second term counts the number of subtrees of $P_{n-g,g}$ containing both $i$ and $n-g$.
For $n-g+1\leq i\leq n,$  we have
\begin{equation}\label{eq21}
f_{P_{n - g, g}}(i) = 1 + (n-g)(2^{g-1}).
\end{equation}
Here $1$ accounts for the number of subtrees of $P_{n-g,g}$ containing $i$ but not $n-g$, while the second term is the number of subtrees of $P_{n-g,g}$  containing both $i$ and $n-g$.

The subtree core of $P_{n-g,g}$ lies in the path from $2$ to $n-g$, as it does not contain any pendant vertex. We have $S_c(P_{n-g,g}) = \{n - g\}$ if and only if $f_{P_{n-g,g}}(n-g) - f_{P_{n-g,g}}(n - g - 1) > 0.$ Using equation (\ref{eq20}), $f_{P_{n-g,g}}(n-g) - f_{P_{n-g,g}}(n - g - 1)=(n - g)(2^g) - (n - g - 1)(1) - (n - g - 1)(2^g)=g - n + 1 + 2^g.$ Therefore,
$$S_c(P_{n-g,g}) = \{n - g\}\text{ if and only is } 2^g + 1 > n - g.$$

Now suppose that $2^g+1\leq n-g$. Then the subtree core of $P_{n-g,g}$ intersects the path connecting the vertex $2$ to $n-g-1$. So, for $j \in \{2, 3, \ldots, n-g-1\}$ with $j \in S_c(P_{n-g,g})$ and $j-1 \notin S_c(P_{n-g,g})$, we have $f_{P_{n-g,g}}(j) - f_{P_{n-g,g}}(j-1)>0$ and $f_{P_{n-g,g}}(j+1) - f_{P_{n-g,g}}(j)\leq 0$ (with equality if and only if $j+1\in S_c(P_{n-g,g}))$. By using equation (\ref{eq20}), we have
$f_{P_{n-g,g}}(j) - f_{P_{n-g,g}}(j-1) = n - g - 2j + 1 + 2^g > 0.$ So, $j < \frac{n - g + 1 + 2^g}{2}.$
Also,
\begin{equation*}
\begin{aligned}
&f_{P_{n-g,g}}(j+1) - f_{P_{n-g,g}}(j) \\
= &(j+1)(n-g-j-1) + (j+1)(2^g) -j(n-g-j) - j(2^g)\\
= &  n - g - 2j - 1 + 2^g.
\end{aligned}
\end{equation*}
If $j+1 \notin S_c(P_{n-g,g})$, then $n - g - 2j - 1 + 2^g < 0$ and so $j > \frac{n - g - 1 + 2^g}{2}.$ Then
$$\frac{n - g - 1 + 2^g}{2} < j < \frac{n - g + 1 + 2^g}{2}$$
gives $S_c(P_{n-g,g}) = \{j\} = \{\frac{n - g + 2^g}{2}\}.$ It follows that $n - g$ must be even. Now, if $j+1 \in S_c(P_{n-g,g})$, then $n - g - 2j - 1 + 2^g = 0$ and so $j = \frac{n - g - 1 + 2^g}{2}.$ Therefore, $S_c(P_{n-g,g}) = \{j, j+1\}$, where $j = \frac{n - g - 1 + 2^g}{2}.$ In this case, $n - g$ must be odd. Thus we have the following.

\begin{theorem}\label{remark1}
The subtree core of the path-star tree $P_{n-g, g}$ is given by
\begin{equation*}
S_c(P_{n-g, g}) =
\begin{cases}
\begin{cases}
\left\{\frac{n - g + 2^g}{2}\right\}, &\text{if $n - g$ is even}\\
\left\{\frac{n - g - 1 + 2^g}{2}, \frac{n - g + 1 + 2^g}{2}\right\} , &\text{if $n - g$ is odd}
\end{cases}, &\text{if $2^g + 1 \leq n - g$,}\\
\{n - g\}, &\text{if $2^g + 1 > n - g.$}
\end{cases}
\end{equation*}
\end{theorem}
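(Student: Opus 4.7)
The plan is to leverage the explicit counting formulas (\ref{eq20}) and (\ref{eq21}) together with the strict concavity of Lemma \ref{concave} and Theorem \ref{thm-score} to pinpoint $S_c(P_{n-g,g})$ by a one-dimensional first-difference analysis along the backbone path $1, 2, \ldots, n-g$. By Remark \ref{rem-psc}, pendant vertices are excluded from the subtree core, so the $g$ leaves $n-g+1, \ldots, n$ and the backbone endpoint $1$ drop out, and the search reduces to the subpath from $2$ to $n-g$.

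First I would compute the backbone first differences directly from (\ref{eq20}): for $1 \leq i \leq n-g-1$,
\[
\Delta(i) := f_{P_{n-g,g}}(i+1) - f_{P_{n-g,g}}(i) = n - g - 2i - 1 + 2^{g}.
\]
This sequence is strictly decreasing in $i$ (consistent with Lemma \ref{concave}), so the restriction of $f_{P_{n-g,g}}$ to the backbone is strictly unimodal. By Theorem \ref{thm-score}, the subtree core is either a singleton or a pair of adjacent vertices; combined with unimodality, it must be either the unique interior $j$ with $\Delta(j-1) > 0 > \Delta(j)$, the adjacent pair $\{j, j+1\}$ with $\Delta(j-1) > 0 = \Delta(j)$, or the hub $\{n-g\}$ when $\Delta(n-g-1) > 0$.

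Next, I would split on the sign of the boundary difference $\Delta(n-g-1) = 2^{g} + g - n + 1$. If $2^{g} + 1 > n - g$ then $\Delta(n-g-1) > 0$, and monotonicity of $\Delta$ forces $f_{P_{n-g,g}}$ to be strictly increasing on the backbone, giving $S_c(P_{n-g,g}) = \{n-g\}$. Otherwise $2^{g} + 1 \leq n - g$, and solving $\Delta(j-1) > 0 \geq \Delta(j)$ yields
\[
\frac{n-g-1+2^{g}}{2} \leq j < \frac{n-g+1+2^{g}}{2}.
\]
A parity split finishes the argument. When $n-g$ is even, $n-g-1+2^{g}$ is odd (since $g \geq 2$ makes $2^{g}$ even), so $\Delta(j) = 0$ is impossible and the unique integer in the interval is $j = (n-g+2^{g})/2$, giving $S_c(P_{n-g,g}) = \{j\}$. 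When $n-g$ is odd, $n-g-1+2^{g}$ is even, so $\Delta(j) = 0$ holds precisely at $j = (n-g-1+2^{g})/2$, and the strict decrease of $\Delta$ ensures $\{j, j+1\} \subseteq S_c(P_{n-g,g})$ with equality.

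The main obstacle is minor bookkeeping: confirming that the candidate $j$ lies in the admissible range $\{2, \ldots, n-g-1\}$ whenever $2^{g} + 1 \leq n - g$, which reduces to the elementary inequality $n - g + 2^{g} \geq 5$ that holds since $g \geq 2$ and $n - g \geq 3$ for path-star trees. Once the boundary difference at the hub is computed correctly, where (\ref{eq20}) absorbs the pendant contribution into the $i \cdot 2^{g}$ term, the entire proof is a one-variable concavity and parity check.
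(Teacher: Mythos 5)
Your proposal is correct and follows essentially the same route as the paper: both exclude pendant vertices via Remark \ref{rem-psc}, reduce to the backbone $2,\ldots,n-g$, compute the first differences $f_{P_{n-g,g}}(i+1)-f_{P_{n-g,g}}(i)=n-g-2i-1+2^{g}$ from equation (\ref{eq20}), test the sign at the hub to get the case $2^{g}+1>n-g$, and finish the remaining case with the same interval-plus-parity argument. No gaps; the bookkeeping you flag (that $j$ lands in $\{2,\ldots,n-g-1\}$) checks out under $g\geq 2$ and $n-g\geq 3$.
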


The position of the center of $P_{n-g,g}$ can also be expressed in terms of $n-g$. The following result is straight-forward.

\begin{theorem}\label{remark2}
The center of the path-star tree $P_{n-g, g}$ is given by
\begin{equation*}
C(P_{n - g, g}) =
\begin{cases}
\left\{\frac{n - g + 2}{2}\right\}, &\text{if $n - g$ is even,}\\
\left\{\frac{n - g + 1}{2}, \frac{n - g + 3}{2}\right\}, &\text{if $n - g$ is odd.}
\end{cases}
\end{equation*}
\end{theorem}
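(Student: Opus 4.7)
\textbf{Proof proposal for Theorem \ref{remark2}.} The plan is to invoke the fact recalled in the paper just after Theorem \ref{thm-center}: for any tree $T$, the center $C(T)$ coincides with the center of any diametral path of $T$. So the problem reduces to (a) identifying a longest path in $P_{n-g,g}$, and (b) reading off its middle vertex or vertices.

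First I would locate the diameter. The candidate paths in $P_{n-g,g}$ fall into three types: paths lying entirely in the ``stem'' $P_{n-g}$, of length at most $n-g-1$; paths from vertex $1$ along the stem to $n-g$ and then to one of the appended pendants, of length $n-g$; and paths between two appended pendants, of length $2$. Since the standing assumptions on $P_{n-g,g}$ force $n-g\geq 3$, the maximum is $n-g$, realised by any path of the form $P_p : 1 - 2 - \cdots - (n-g) - p$ with $p\in\{n-g+1,\ldots,n\}$. Hence $\mathrm{diam}(P_{n-g,g})=n-g$.

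Next I would fix one such diametral path $P_p$ and apply the elementary description of the center of a path on $n-g+1$ vertices. When $n-g$ is even, the center of $P_p$ is the single middle vertex at position $\frac{n-g}{2}+1=\frac{n-g+2}{2}$ counted from end $1$, which corresponds to the stem label $\frac{n-g+2}{2}$ (it lies on the stem because $\frac{n-g+2}{2}\leq n-g$ whenever $n-g\geq 2$). When $n-g$ is odd, the center of $P_p$ consists of the two adjacent middle vertices at positions $\frac{n-g+1}{2}$ and $\frac{n-g+3}{2}$; both are stem labels since $\frac{n-g+3}{2}\leq n-g$ whenever $n-g\geq 3$, which holds automatically.

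The final observation is that the selected vertices all lie on the common prefix $1-2-\cdots-(n-g)$ of every diametral path $P_p$, so the output is independent of the choice of pendant $p$ and the two parity cases match the theorem's formula. There is no genuine obstacle here, which is why the authors call the result ``straight-forward''; the only routine verification is the parity-indexed inequality confirming that the middle of $P_p$ actually lies on the stem and not among the appended pendants.
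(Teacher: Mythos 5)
Your proof is correct and is exactly the "straight-forward" argument the authors intend: the paper itself records (after Theorem \ref{thm-center}) that $C(T)$ equals the center of any diametral path, the diameter of $P_{n-g,g}$ is $n-g$ realised by $1-2-\cdots-(n-g)-p$ for a pendant $p$, and the middle vertex or vertices of that path lie on the stem by the inequalities you check. The paper gives no proof of this theorem, so there is nothing further to compare.
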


\begin{theorem}\label{remark3}
The distance between the center and the subtree core of the path-star tree $P_{n-g,g}$ is given by
\begin{equation*}\label{eq22}
d_{P_{n - g, g}}(C, S_c) =
\begin{cases}
\begin{cases}
2^{g - 1} - 1, &\text{if $n - g$ is even}\\
2^{g - 1} - 2, &\text{if $n - g$ is odd}
\end{cases}, &\text{if $2^g + 1 \leq n - g$}\\
\begin{cases}
\frac{n - g - 2}{2}, &\text{if $n - g$ is even}\\
\frac{n - g - 3}{2}, &\text{if $n - g$ is odd}\\
\end{cases}, &\text{if $2^g + 1 > n - g.$}
\end{cases}
\end{equation*}
\end{theorem}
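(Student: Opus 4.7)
The proof is a direct computation that combines the formulas from Theorems \ref{remark1} and \ref{remark2}. Both $C(P_{n-g,g})$ and $S_c(P_{n-g,g})$ are subsets of the path on the vertices $\{2,3,\ldots,n-g\}$, and each has either one or two elements (and when two, they are consecutive integers). So the distance between $C$ and $S_c$ is simply the minimum of $|a-b|$ over $a\in C$, $b\in S_c$, which reduces to subtracting vertex labels.

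My plan is to split into the two regimes appearing in Theorem \ref{remark1}. In the first regime $2^g+1\leq n-g$, I will observe that since $g\geq 2$ gives $2^g\geq 4$, every vertex of $S_c$ lies strictly to the right of every vertex of $C$ on the path; hence the minimum distance is attained by the pair (largest element of $C$, smallest element of $S_c$). I would then simply subtract these two labels, treating the parities of $n-g$ separately:
\begin{itemize}
\item If $n-g$ is even, $C=\{\tfrac{n-g+2}{2}\}$ and $S_c=\{\tfrac{n-g+2^g}{2}\}$, yielding $\tfrac{2^g-2}{2}=2^{g-1}-1$.
\item If $n-g$ is odd, the largest central vertex is $\tfrac{n-g+3}{2}$ and the smallest subtree-core vertex is $\tfrac{n-g-1+2^g}{2}$, yielding $\tfrac{2^g-4}{2}=2^{g-1}-2$.
\end{itemize}

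In the second regime $2^g+1>n-g$, Theorem \ref{remark1} gives $S_c=\{n-g\}$, which sits at or to the right of $C$, so the distance is $n-g$ minus the larger central vertex: $(n-g-2)/2$ if $n-g$ is even, $(n-g-3)/2$ if $n-g$ is odd. There is essentially no obstacle here; the only thing to verify carefully is the claim that $S_c$ lies weakly to the right of $C$, which in both regimes follows from the hypothesis $g\geq 2$ (so $2^g\geq 4>2$), ensuring that pairing the rightmost of $C$ with the leftmost of $S_c$ is indeed optimal. The whole argument is mechanical, which is why the authors describe it as straightforward.
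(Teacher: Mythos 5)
Your proposal is correct and follows essentially the same route as the paper: combine Theorems \ref{remark1} and \ref{remark2} and subtract the vertex labels case by case, with the same arithmetic in each of the four subcases. Your extra remark that $2^g\geq 4$ guarantees $S_c$ lies weakly to the right of $C$ (so the rightmost-of-$C$/leftmost-of-$S_c$ pairing is the one that matters) is a small justification the paper leaves implicit, but the argument is otherwise identical.
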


\begin{proof}
From Theorems \ref{remark1} and \ref{remark2}, we have
\begin{equation*}
d_{P_{n - g, g}}(C, S_c) =
\begin{cases}
\begin{cases}
\frac{n - g + 2^g - n + g - 2}{2}, &\text{if $n - g$ is even}\\
\frac{n - g - 1 + 2^g - n + g -3}{2}, &\text{if $n - g$ is odd}
\end{cases}, &\text{if $2^g + 1 \leq n - g$}\\
\begin{cases}
\frac{2n - 2g - n + g - 2}{2}, &\text{if $n - g$ is even}\\
\frac{2n - 2g - n + g - 3}{2}, &\text{if $n - g$ is odd}\\
\end{cases}, &\text{if $2^g + 1 > n - g.$}
\end{cases}
\end{equation*}
Now the result follows from the above.
\end{proof}

For a given $n\geq 5$, we now try to find a $g \in \{ 2, \cdots, n - 3\}$ for which $d_{P_{n - g, g}}(C, S_c)$ is maximum among all trees on $n$ vertices. Let $g_0$ denote the smallest value of $g$, with respect to $n$, for which $2^{g_0} + g_{0} > n -1$ (that is, $2^{g_0} + 1 > n - g_0$). By Theorem \ref{remark1}, $S_c(P_{n - g_0, g_0}) = \{n - g_0\}$. Then, by Theorem \ref{remark3}, we have
\begin{equation}\label{eq23}
d_{P_{n - g_0, g_0}}(C, S_c) =
\begin{cases}
\frac{n - g_0 - 2}{2}, &\text{if $n - g_0$ is even}\\
\frac{n - g_0 - 3}{2}, &\text{if $n - g_0$ is odd.}
\end{cases}
\end{equation}

\begin{proposition}\label{propn3}
Among all trees on $n\geq 5$ vertices, the path-star tree $P_{n-g_0,g_0}$ maximizes the distance between the center and the subtree core.
\end{proposition}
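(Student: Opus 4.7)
The plan is to combine Theorem \ref{thm-csc} with the explicit formula from Theorem \ref{remark3}, and then optimize over $g$. By Theorem \ref{thm-csc}, it is enough to show that $P_{n-g_0,g_0}$ maximizes $d_{P_{n-g,g}}(C,S_c)$ over the family $\Gamma_n$. The definition of $g_0$ naturally splits $\Gamma_n$ into two regimes: Regime A where $2\leq g\leq g_0-1$ (equivalently $2^g+1\leq n-g$) and Regime B where $g_0\leq g\leq n-3$ (equivalently $2^g+1>n-g$). The proposed strategy is to show that the distance is strictly increasing on Regime A, weakly decreasing on Regime B, and that the value at the transition point $g=g_0$ dominates the value at $g=g_0-1$.

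First I would handle Regime A. By Theorem \ref{remark3}, the distance is $2^{g-1}-1$ or $2^{g-1}-2$ depending on the parity of $n-g$. Moving from $g$ to $g+1$ doubles the leading term from $2^{g-1}$ to $2^g$, so even accounting for the parity-dependent subtraction (at most $-2$ versus $-1$), the distance strictly increases. Hence the maximum of the distance on Regime A is attained at $g=g_0-1$, with value at most $2^{g_0-2}-1$. Next, for Regime B, Theorem \ref{remark3} gives $(n-g-2)/2$ or $(n-g-3)/2$. A case analysis on the parity of $n-g$ shows that passing from $g$ to $g+1$ decreases the value by either $0$ or $1$. So the maximum on Regime B is attained at $g=g_0$.

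Finally I would compare $d_{P_{n-g_0,g_0}}(C,S_c)$ with $d_{P_{n-g_0+1,g_0-1}}(C,S_c)$. The minimality of $g_0$ gives $2^{g_0-1}+(g_0-1)\leq n-1$, i.e.
\[
n-g_0 \;\geq\; 2^{g_0-1}.
\]
Using this, a short case split on the parities of $n-g_0$ and $n-g_0+1$ confirms
\[
\left\lfloor \tfrac{n-g_0-2}{2}\right\rfloor \;\geq\; 2^{g_0-2}-1,
\]
which is the required inequality $d_{P_{n-g_0,g_0}}(C,S_c)\geq d_{P_{n-g_0+1,g_0-1}}(C,S_c)$. (The degenerate case $g_0=2$, which occurs only for $n\in\{5,6\}$, has no Regime A at all, so only the monotonicity on Regime B is needed there.)

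The main obstacle is a purely bookkeeping one: Theorem \ref{remark3} contains four cases (two for each regime) depending on the parity of $n-g$, so both the monotonicity arguments and the final comparison at $g=g_0$ require a small parity case analysis. There is no real conceptual difficulty — the inequality $n-g_0\geq 2^{g_0-1}$ coming from the definition of $g_0$ is exactly what makes the boundary comparison work — but care must be taken to verify each parity subcase cleanly.
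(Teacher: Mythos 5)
Your proposal is correct and follows essentially the same route as the paper: reduce to the family $\Gamma_n$ via Theorem \ref{thm-csc}, bound every value in the regime $2^g+1\leq n-g$ by $2^{g_0-2}-1$, note the monotone decrease in the regime $2^g+1>n-g$, and settle the boundary comparison using $n-g_0\geq 2^{g_0-1}$ from the minimality of $g_0$. The only cosmetic difference is that the paper absorbs the parity bookkeeping into a single chain of inequalities ending in ``the difference is an integer $\geq -\tfrac{1}{2}$, hence $\geq 0$,'' whereas you carry out the parity case split explicitly; both are valid.
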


\begin{proof}
By Theorem \ref{thm-csc}, we need to prove the following:
$$d_{P_{n - g_0, g_0}}(C, S_c) \geq d_{P_{n - g_0 + k, g_0 - k}}(C,S_c)\text{ for }k \in \{1, 2, \ldots, g_0 - 2\}$$
and
$$d_{P_{n - g_0, g_0}}(C, S_c) \geq d_{P_{n - g_0 - l, g_0 + l}}(C, S_c)\text{ for }l \in \{1, 2, \ldots, n - g_0 - 3\}.$$
First assume that $n - g_0$ is odd. Then, for $k \in \{1, 2, \ldots, g_0 - 2\}$, we have
\begin{equation}
\label{eq39}
\begin{aligned}
d_{P_{n - g_0, g_0}}(C, S_c) - d_{P_{n - g_0 + k, g_0 - k}}(C, S_c) &\geq \frac{n - g_0 - 3}{2} - 2^{g_0 - k - 1} + 1\\
&\geq \frac{n - g_0 - 3}{2} - 2^{g_0 - 1 - 1} + 1\\
&= \frac{n - g_0 - 3 - 2^{g_0 - 1} + 2}{2}\\
&= \frac{n - g_0 - 1 - 2^{g_0 - 1}}{2}.
\end{aligned}
\end{equation}
By the definition of $g_0$, we have $2^{g_0-1}\leq n-(g_0-1)-1=n-g_0.$ Therefore,
$$d_{P_{n - g_0, g_0}}(C, S_c) - d_{P_{n - g_0 + k, g_0 - k}}(C, S_c) \geq \frac{n - g_0 - 1 - 2^{g_0 - 1}}{2}
\geq \frac{-1}{2}.$$
Since both $d_{P_{n - g_0, g_0}}(C, S_c)$ and $d_{P_{n - g_0 + k, g_0 - k}}(C, S_c)$  are integers, their difference must be an integer. Hence
$$d_{P_{n - g_0, g_0}}(C, S_c) - d_{P_{n - g_0 + k, g_0 - k}}(C, S_c) \geq 0.$$
Now, for $l \in \{1, 2, \ldots, n - g_0 - 3\}$, we have
\begin{equation}\label{eq42}
\begin{aligned}
d_{P_{n - g_0, g_0}}(C, S_c) - d_{P_{n - g_0 - l, g_0 + l}}(C, S_c) &\geq \frac{n - g_0 - 3}{2} -\left(\frac{n - g_0 - l - 2}{2}\right)\\
&\geq \frac{n - g_0 - 3}{2} -\left(\frac{n - g_0 - 1 - 2}{2}\right)\\
&= 0.
\end{aligned}
\end{equation}
For the case $n-g_0$ even, the above arguments can be used again as $\frac{n - g_0 - 2}{2} > \frac{n - g_0 - 3}{2}$. Thus,
$$d_{P_{n - g_0, g_0}}(C, S_c) \geq d_{P_{n - g, g}}(C, S_c)$$
for all $g \in \{1, 2, \ldots, n-3\}.$ This completes the proof.
\end{proof}

As a consequence of Proposition \ref{propn3}, we have the following.

\begin{corollary}
Let $T$ be tree on $n\geq 5$ vertices and let $g_0$ be the smallest positive integer such that $2^{g_0} + g _0> n - 1.$ Then $d_T(C,S_c)\leq \lfloor \frac{n-g_0}{2}\rfloor -1$.
\end{corollary}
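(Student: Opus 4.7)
The plan is to chain together the results already established in the paper. First, I would invoke Theorem \ref{thm-csc} to reduce the problem from arbitrary trees on $n$ vertices to path-star trees: there exists some $g \in \{2, 3, \ldots, n-3\}$ such that $d_T(C,S_c) \leq d_{P_{n-g,g}}(C, S_c)$. Then I would apply Proposition \ref{propn3}, which identifies $P_{n-g_0,g_0}$ as the maximizer among all path-star trees on $n$ vertices, to conclude $d_{P_{n-g,g}}(C, S_c) \leq d_{P_{n-g_0,g_0}}(C, S_c)$. Combining these yields $d_T(C,S_c) \leq d_{P_{n-g_0,g_0}}(C, S_c)$.

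It remains to evaluate $d_{P_{n-g_0,g_0}}(C, S_c)$ explicitly and recognize it as $\lfloor (n-g_0)/2 \rfloor - 1$. Since $g_0$ is defined as the smallest positive integer with $2^{g_0} + g_0 > n-1$, equivalently $2^{g_0} + 1 > n - g_0$, the formula from equation (\ref{eq23}) applies and gives
\begin{equation*}
d_{P_{n - g_0, g_0}}(C, S_c) =
\begin{cases}
\frac{n - g_0 - 2}{2}, & \text{if $n - g_0$ is even,}\\
\frac{n - g_0 - 3}{2}, & \text{if $n - g_0$ is odd.}
\end{cases}
\end{equation*}
A one-line parity check unifies both cases: when $n - g_0$ is even, $\lfloor (n-g_0)/2 \rfloor - 1 = (n-g_0)/2 - 1 = (n-g_0-2)/2$, and when $n - g_0$ is odd, $\lfloor (n-g_0)/2 \rfloor - 1 = (n-g_0-1)/2 - 1 = (n-g_0-3)/2$. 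In either case the value equals $\lfloor (n-g_0)/2 \rfloor - 1$, giving the desired bound.

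There is no real obstacle here; the corollary is essentially a repackaging of Proposition \ref{propn3} together with the explicit formula \eqref{eq23}. The only thing that might be worth spelling out is the equivalence between the condition $2^{g_0} + g_0 > n-1$ in the statement and the condition $2^{g_0} + 1 > n - g_0$ used throughout Section 2, which is immediate from rearrangement. I would keep the proof to a few lines, present it in the order (reduction to path-star, reduction to $g_0$, evaluation of the formula, unification via floor), and note explicitly that equality is attained by $P_{n-g_0,g_0}$ itself.
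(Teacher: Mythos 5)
Your proposal is correct and matches the paper's (implicit) argument exactly: the paper states this corollary as an immediate consequence of Proposition \ref{propn3} together with the evaluation in equation (\ref{eq23}), which is precisely the chain you describe, and your parity check unifying the two cases into $\lfloor (n-g_0)/2\rfloor - 1$ is right.
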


\section{Centroid and subtree core}

In this section we prove results similar to Theorem \ref{thm-csc} and Proposition \ref{propn3} in the case of centroid and subtree core. We first prove the following lemma.

\begin{lemma}\label{lem-cdsc}
Let $T$ be a tree, $v \in S_c(T)$ and $B$ be a branch at $v$. Let $u$ be the vertex in $B$ adjacent to $v$ and $x$ be a pendant vertex of $T$ in $B$. Suppose that $B$ is not a path. Let $y$ be the closest vertex to $x$ with  $d(y)\geq 3$ and let $[y, y_1, y_2, \cdots, y_m=x]$ be the path connecting $y$ and $x$. Let  $z\neq y$ be a vertex of $B$ such that the path from $v$ to $z$ contains $y$ but not $y_1$. Let $\widetilde{T}$ be the tree obtained from $T$ by detaching the path $[y_1, y_2, \cdots, y_m]$ from $y$ and attaching it to $z$. Then $f_{\widetilde{T}}(v) > f_{\widetilde{T}}(u)$.
\end{lemma}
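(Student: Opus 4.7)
The plan is to decompose $T$ around the branching vertex $y$ and to compare $f_{\widetilde T}(v)$ and $f_{\widetilde T}(u)$ by writing both sides as perturbations of the corresponding quantities in $T$. Set $T_0 := T - \{y_1, y_2, \ldots, y_m\}$. Since $d_T(y) \geq 3$, one has $d_{T_0}(y) \geq 2$, so the forest $T_0 - y$ has components $A_1, A_2, \ldots, A_r$ with $r \geq 2$. Let $A := A_1$ be the component containing $v$, let $A_2$ be the component containing $z$ (forced by the hypothesis that the $v$-to-$z$ path passes through $y$ but avoids $y_1$), and let $w_i$ denote the neighbor of $y$ in $A_i$. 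In the generic case $u \neq y$, both $v$ and $u$ lie in $A$, with $u$ sitting on the path from $v$ to $w_1$.

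The first technical step is to derive closed forms for $f_T(a)$ and $f_{\widetilde T}(a)$ when $a \in A$. Splitting the subtrees of $T$ containing $a$ according to whether they contain $y$ and, if so, which prefix of the pendant path $[y_1, \ldots, y_m]$ they include, yields
\[
f_T(a) = f_A(a) + (1 + m)\, f_A(a, w_1) \prod_{i=2}^{r}\bigl(1 + f_{A_i}(w_i)\bigr),
\]
and the same analysis applied to $\widetilde T$ (in which the pendant path now hangs off $z \in A_2$) gives
\[
f_{\widetilde T}(a) = f_A(a) + f_A(a, w_1)\bigl[1 + f_{A_2}(w_2) + m\, f_{A_2}(w_2, z)\bigr] \prod_{i=3}^{r}\bigl(1 + f_{A_i}(w_i)\bigr),
\]
with empty products interpreted as $1$. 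Subtracting collapses the $a$-dependence into a single factor,
\[
f_{\widetilde T}(a) - f_T(a) = f_A(a, w_1)\cdot \Delta,\qquad \Delta := -m\bigl[1 + f_{A_2}(w_2) - f_{A_2}(w_2, z)\bigr] \prod_{i=3}^{r}\bigl(1 + f_{A_i}(w_i)\bigr),
\]
and $\Delta < 0$ because $f_{A_2}(w_2, z) \leq f_{A_2}(w_2)$.

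Specializing the identity to $a = v$ and $a = u$ and subtracting then gives
\[
f_{\widetilde T}(v) - f_{\widetilde T}(u) = \bigl[f_T(v) - f_T(u)\bigr] + \bigl[f_A(v, w_1) - f_A(u, w_1)\bigr]\,\Delta.
\]
The first bracket is $\geq 0$ since $v \in S_c(T)$. For the second, the path from $v$ to $w_1$ in $A$ begins with the edge $\{v, u\}$ (because $u$ is the unique neighbor of $v$ in the branch $B$, which also contains $y$), so every subtree of $A$ containing both $v$ and $w_1$ contains $u$; exhibiting a subtree that contains $u$ and $w_1$ but omits $v$ (either $\{u\}$ when $u = w_1$, or the $u$-to-$w_1$ path otherwise) yields the strict inequality $f_A(v, w_1) < f_A(u, w_1)$. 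Multiplying this negative bracket by $\Delta < 0$ produces a strictly positive quantity, and combining with the non-negativity of the first bracket delivers $f_{\widetilde T}(v) > f_{\widetilde T}(u)$. The mildly degenerate case $u = y$ needs a small separate check---there $f_{\widetilde T}(u) - f_T(u) = (1 + f_A(v))\Delta$ rather than $f_A(u, w_1)\Delta$---but the same sign analysis goes through. The main obstacle is precisely this subtree-counting bookkeeping around $y$: organizing the contributions from $A$, from the side components $A_2, \ldots, A_r$, and from the pendant path in its two positions so that all the $a$-dependence concentrates cleanly onto the single factor $f_A(a, w_1)$.
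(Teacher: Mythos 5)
Your proof is correct. The overall strategy coincides with the paper's: express the effect of the path-relocation on $f(\cdot)$, take the difference at $v$ and $u$, absorb $f_T(v)-f_T(u)\geq 0$ from $v\in S_c(T)$, and finish with a strict inequality between two subtree counts. But the bookkeeping is organized around a different decomposition. The paper works with the tree $T'=T-\{y_1,\dots,y_m\}$ and writes $f_{\widetilde T}(a)=f_T(a)-\tfrac{m}{m+1}f_T(a,y)+m\bigl[f_T(a,z)-\tfrac{m}{m+1}f_T(a,z,y)\bigr]$, then uses the observation that every subtree containing $u$ and $z$ contains $y$ to cancel terms, and finally splits across the edge $\{u,v\}$ to reduce everything to $f_X(u,y)>f_X(u,z)$ in the component $X$ of $T-\{u,v\}$ containing $u$. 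You instead decompose fully at the branch vertex $y$ into the components $A_1,\dots,A_r$ plus the pendant path, which yields the cleaner identity $f_{\widetilde T}(a)-f_T(a)=f_A(a,w_1)\cdot\Delta$ with a single negative constant $\Delta$ independent of $a$; the whole lemma then reduces to the one-line inequality $f_A(v,w_1)<f_A(u,w_1)$, which holds because $u$ lies on the $v$--$w_1$ path. (In effect your $f_A(a,w_1)$ is the paper's $f_T(a,y)$ up to a positive constant, so the two calculations are reparametrizations of each other.) Your version buys a more transparent monotonicity statement and avoids the paper's two-stage cancellation, at the cost of heavier notation and the need to handle the degenerate case $u=y$ separately --- which you do correctly, since there $f_{\widetilde T}(v)-f_{\widetilde T}(u)=[f_T(v)-f_T(u)]-\Delta>0$. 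The paper's formulation treats $u=y$ uniformly because it never requires $u$ to lie in a component of $T_0-y$.
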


\begin{proof}
Let $T'$ be the tree obtained from $T$ by removing the path $[y_1, y_2, \cdots, y_m]$. Let $a \in V(T')$. Then
\begin{equation}\label{eq45}
f_{\widetilde{T}}(a) = f_{T'}(a) + mf_{T'}(a, z).
\end{equation}
Here the first term represents the number of subtrees of $\widetilde{T}$ containing $a$ but not $y_1$, and the second term represents the number of subtrees of $\widetilde{T}$ containing both $a$ and $y_1.$
Similarly, we have $f_T(a) = f_{T'}(a) + mf_{T'}(a, y)$ and $f_T(a, y)=(m+1)f_{T'}(a,y)$. It follows that
\begin{equation}\label{eq46}
f_{T'}(a) = f_T(a) - \frac{m}{m+1}f_T(a, y).
\end{equation}
Again, $f_T(a,z) = f_{T'}(a,z) + mf_{T'}(a,z,y)$ and $f_T(a,z,y) = (m+1)f_{T'}(a,z,y)$ imply
\begin{equation}
\label{eq47}
f_{T'}(a, z) = f_T(a, z) - \frac{m}{m + 1}f_T(a, z, y).
\end{equation}
Using equations (\ref{eq46}) and (\ref{eq47}) in equation (\ref{eq45}), we get
\begin{equation}\label{eq48}
f_{\widetilde{T}}(a) = f_T(a) - \frac{m}{m+1}f_T(a, y) + m\left[f_T(a, z) - \frac{m}{m + 1}f_T(a, z, y)\right],
\end{equation}
for any $a\in V(T')$. Considering $a=v$ and $a=u$ in equation (\ref{eq48}), we get
\begin{equation}
\label{eq49}
\begin{aligned}
f_{\widetilde{T}}(v) - f_{\widetilde{T}}(u) &=  f_T(v) - \frac{m}{m+1}f_T(v, y) + mf_T(v, z) - \frac{m^2}{m + 1}f_T(v, z, y)\\
&  \;\;\;\;\;\;\;\;\; - f_T(u)+ \frac{m}{m+1}f_T(u, y) - mf_T(u, z) + \frac{m^2}{m + 1}f_T(u, z, y)\\
&\geq  \frac{m}{m + 1}\left(f_T(u, y) - f_T(v, y)\right) + m(f_T(v, z) - f_T(u, z))\\
& \;\;\;\;\;\;\;\;\; + \frac{m^2}{m + 1}(f_T(u, z, y) - f_T(v, z, y))\\
&= \frac{m}{m + 1}(f_T(u, y) - f_T(v, y)) + m(f_T(v, z) - f_T(u, z)) \\
&  \;\;\;\;\;\;\;\;\; + \frac{m^2}{m + 1}(f_T(u, z) - f_T(v, z))\\
&= \frac{m}{m + 1}(f_T(u, y) - f_T(v, y)) + \frac{m}{m + 1}(f_T(v, z) - f_T(u, z))\\
\end{aligned}
\end{equation}
In the above, the first inequality holds as $f_T(v)-f_T(u)\geq 0$ and the second last equality holds since any subtree containing $u$ and $z$ must contain $y$.
Now let $X$ be the component of $T$ containing $u$ after deleting the edge $\{u,v\}$. Then it can be seen that
$$f_T(u,y)=f_X(u,y)+f_T(v,y) \mbox{ and } f_T(u,z)=f_X(u,z)+f_T(v,z).$$
Putting these values of $f_T(u,y)$ and $f_T(u,z)$ in equation(\ref{eq49}), we get
$$f_{\widetilde{T}}(v) - f_{\widetilde{T}}(u)\geq \frac{m}{m+1}(f_X(u,y) -f_X(u,z)).$$
Since $y\neq z$, we have $f_X(u,y) > f_X(u,z)$ and so $f_{\widetilde{T}}(v) - f_{\widetilde{T}}(u) > 0$.
\end{proof}

Next, we prove a result analogous to Theorem \ref{thm-csc}. It says that, among all trees on $n$ vertices, the distance between the centroid and the subtree core is maximized by a path-star tree.

\begin{theorem}\label{thm-cdsc}
Let $T$ be any tree on $n\geq 5$ vertices. Then there exists a path-star tree $P_{n - g, g}$, for some $g$, with $d_{P_{n - g, g}}{(C_d, S_c)} \geq d_T{(C_d, S_c)}$.
\end{theorem}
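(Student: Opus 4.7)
I mirror the proof of Theorem~\ref{thm-csc}: assuming $d_T(C_d,S_c)\geq 1$, I transform $T$ into a path-star tree via operations that do not decrease the centroid--subtree-core distance. Write $S_c(T)=\{v_1,v_2\}$ (with $v_1=v_2$ if $|S_c(T)|=1$) and $C_d(T)=\{w_1,w_2\}$ similarly, labelled so that $d_T(C_d,S_c)=d_T(w_2,v_1)$. Let $B_1,\ldots,B_m$ be the branches at $v_2$ with $C_d(T),S_c(T)\subseteq V(B_1)$, and set $s_1=|V(B_1)|-1$.

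The plan has two stages. First, iterate Lemma~\ref{lem-csc1}: detach each pendant of $B_i$ ($i\geq 2$) that is not adjacent to $v_2$ and reattach it as a pendant of $v_2$. Each step preserves $v_2\in S_c$, and the resulting tree $\widetilde{T}$ has $S_c(\widetilde{T})=\{v_2\}$ with every vertex outside $B_1$ adjacent to $v_2$. Second, if $B_1$ in $\widetilde{T}$ is not already a path, iterate Lemma~\ref{lem-cdsc} within $B_1$: for each pendant path hanging off a branching vertex $y\in V(B_1)$ with $d(y)\geq 3$, reattach it to a further vertex $z$ as the lemma allows, until $B_1$ becomes a path. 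Lemma~\ref{lem-cdsc} combined with the concavity Lemma~\ref{concave} ensures $v_2$ stays in $S_c$ throughout. The resulting tree $\overline{T}$ is a path-star tree.

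The technical heart is showing $d_{\overline{T}}(C_d(\overline{T}),v_2)\geq d_T(C_d,S_c)$. For any $u\in V(B_1)\setminus\{v_2\}$, Stage~1 leaves $\omega(u)$ unchanged: the ``$v_2$-side'' branch at $u$ contains the same vertex set in $\widetilde{T}$ as in $T$ (vertices of $B_2,\ldots,B_m$ are merely relocated inside it), while the other branches at $u$ lie inside $B_1$ and are untouched. Meanwhile $\omega_{\widetilde{T}}(v_2)=s_1$. The standard centroid bound $\omega_T(w_2)\leq\lfloor n/2\rfloor$, combined with the upper estimate on the ``$v_2$-side'' branch at $w_2$ imposed by $w_2\in C_d(T)$, forces $s_1\geq\lceil n/2\rceil$ and hence $\omega_T(w_2)\leq s_1$. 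In the strict subcase $\omega_T(w_2)<s_1$, the centroid survives at $w_2$ in $\widetilde{T}$, giving $d_{\widetilde{T}}(C_d,S_c)\geq d_T(w_2,v_2)\geq d_T(C_d,S_c)$; Stage~2, being a sequence of outward stretches of $B_1$, can only push the centroid farther from $v_2$ along the emerging path. The principal obstacle is the equality subcase $\omega_T(w_2)=s_1$, where the centroid would move onto $v_2$ in $\widetilde{T}$ and the distance would collapse; unwinding when equality is possible shows it forces $w_2$ to be adjacent to $v_2$, restricting this subcase to $|S_c(T)|=1$ with $d_T(C_d,S_c)=1$ (and $n$ even), where the conclusion instead follows by exhibiting directly a path-star tree with $d(C_d,S_c)\geq 1$ (e.g.\ $P_{n-3,3}$ for suitable $n$).
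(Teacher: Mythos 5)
Your overall plan matches the paper's: sweep everything outside $B_1$ onto $v_2$ as pendants via Lemma \ref{lem-csc1}, then straighten what remains via Lemma \ref{lem-cdsc}, checking that $S_c$ stays at $v_2$ and the centroid never moves toward it. The subtree-core bookkeeping (Lemma \ref{lem-cdsc} plus Lemma \ref{concave}, using that all other branches at $v_2$ are single pendants) is correct. But the two points where you deviate from the paper are exactly where the write-up is incomplete.

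First, your ``equality subcase'' $\omega_T(w_2)=s_1$ is not correctly disposed of. The proposed patch --- exhibit a path-star tree with $d(C_d,S_c)\geq 1$, e.g.\ $P_{n-3,3}$ --- fails for concrete $n$: for $n=8$ one has $S_c(P_{5,3})=\{5\}$ and $C_d(P_{5,3})=\{4,5\}$, so the distance is $0$; for $n=6$ \emph{no} path-star tree has positive centroid--core distance (Theorem \ref{remark5}). So if the subcase could occur for such $n$, your argument would collapse. In fact the subcase is vacuous and you should prove that instead: since $s_1\geq n-1-|E(D)|+d_T(w_2,v_2)$, where $D$ is the branch at $w_2$ containing $v_2$, equality with $\omega_T(w_2)\leq\lfloor n/2\rfloor$ forces $\omega_T(w_2)=|E(D)|=n/2$ and $d_T(w_2,v_2)=1$; then $D$ is the unique heaviest branch at $w_2$, hence contains the second centroid vertex $w_1$ adjacent to $w_2$, and since $w_2$ is a pendant vertex of $D$ its unique neighbour in $D$ is simultaneously $w_1$ and $v_2$, giving $v_2\in C_d(T)$ and contradicting $d_T(C_d,S_c)\geq 1$. (The paper sidesteps all of this by comparing $\omega_{\widetilde T}(v_2)$ and $\omega_{\widetilde T}(w_2)$ directly through a short case analysis on which branch realizes $\omega_T(v_2)$, obtaining strict inequality.)

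Second, and more seriously, the sentence ``Stage 2, being a sequence of outward stretches of $B_1$, can only push the centroid farther from $v_2$'' is the technical heart of the second half and is asserted, not proved. The claim is true, but it needs an argument: for instance, root the tree at $v_2$ and observe that each application of Lemma \ref{lem-cdsc} (with $z$ beyond $y$) leaves every vertex's depth and rooted-subtree size non-decreasing, that the set of vertices whose rooted subtree has at least $\lceil n/2\rceil$ vertices is a chain from $v_2$ determining the centroid, and that the shallowest centroid vertex therefore cannot get shallower (the case $|C_d|=1$ needs a separate sentence to rule out the parent becoming a centroid vertex). The paper avoids proving such a monotonicity statement in this generality: it first relocates the branches hanging off the $w_1$--$v_2$ path onto $v_2$ as pendants (handled by its Cases I and II), and only then straightens the branches at $w_1$ on the far side, where $\omega(w_1)$ and $\omega(w_2)$ are manifestly unchanged. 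Your route --- straightening all of $B_1$, producing a path-star tree with a longer path and smaller $g$ --- is legitimate and arguably cleaner, but the centroid-monotonicity lemma it rests on must actually be supplied.
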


\begin{proof}
We may assume that $d_T{(C_d, S_c)}\geq 1$. Let $C_d(T)=\{w_1, w_2\}$ and $S_c(T) = \{v_1, v_2\}$, where $w_1=w_2$ if $|C_d(T)|=1$ and $v_1=v_2$ if $|S_c(T)|=1$. We may also assume that $d_T{(C_d, S_c)}=d_T(w_2, v_1)$.

Let $B_1, B_2, \ldots, B_m$ be the branches at $v_2$. We assume that the branch $B_1$ contains $C_d(T)$ and $S_c(T)$. Using the same graph operations recursively as in the proof of Theorem \ref{thm-csc}, construct a new tree $\widetilde{T}$ from $T$ by attaching each vertex of $B_i$, $i\in\{2,3,\ldots,m\}$, non-adjacent with $v_2$ in $T$ as a pendant vertex adjacent to $v_2$. By Lemma \ref{lem-csc1},  $S_c(\widetilde{T}) = \{v_2\}$. If the vertices of $T$, not in $B_1$, are all pendants, then we take $\widetilde{T}=T$ and proceed with $S_c(\widetilde{T})= \{v_1, v_2\}$.

We now study the position of $C_d(\widetilde{T})$. Note that $B_1$ remains a branch in $\widetilde{T}$ at $v_2$. For any $v\in V(\widetilde{T})\setminus B_1$, $v$ is a pendant vertex in $\widetilde{T}$ and so $\omega_{\widetilde{T}}(v)=n-1\geq\omega_T(v)$. Also $\omega_{\widetilde{T}}(x)=\omega_T(x)$ for every $x\in B_1\setminus\{v_2\}$, in particular, we have
$$\omega_{\widetilde{T}}(w_1)=\omega_T(w_1)=\omega_T(w_2)= \omega_{\widetilde{T}}(w_2).$$
The weight of $v_2$ in $\widetilde{T}$ corresponds to the branch $B_1$ at $v_2$. If the weight of $v_2$ in $T$ corresponds to the branch $B_1$, then $\omega_{\widetilde{T}}(v_2)=\omega_T(v_2)>\omega_T(w_2)=\omega_{\widetilde{T}}(w_2)$. If the weight of $v_2$ in $T$ corresponds to a branch $B_j$, $j\neq 1$, then $\omega_T(w_2)$ must correspond to a branch at $w_2$ which does not contain $v_2$ and it follows that $\omega_{\widetilde{T}}(v_2)>\omega_{\widetilde{T}}(w_2)$. So $C_d(T) = C_d(\widetilde{T})$ and
$$d_{\widetilde{T}}(C_d, S_c) \geq d_{T}(C_d, S_c).$$

Now consider the path $P=[w_1,a_1,\cdots,a_k,v_2]$ from $w_1$ to $v_2$ in $\widetilde{T}$, where $a_1=w_2$ if $w_1\neq w_2$ and $a_k=v_1$ if $v_1\neq v_2$. Suppose that $d_{\widetilde{T}}(a_i)\geq 3$ for some $i$ and $B$ is a branch at $a_i$ which contains neither  $w_1$ nor $v_2$. Applying the graph operation as before (starting with the pendant vertices), attach  the vertices of $B\setminus\{a_i\}$ as pendants to $v_2$. Do this for all branches at $a_i$, $1\leq i\leq k$, with $d(a_i)\geq 3$. Name the new tree thus obtained as $\widehat{T}$. By Lemma \ref{lem-csc1},  $S_c(\widehat{T})= \{v_2\}$. If $d(a_i)=2$ for all $1\leq i\leq k$, then take $\widehat{T}=\widetilde{T}$ and continue with $S_c(\widehat{T})=S_c(\widetilde{T})$.

We now study the position of $C_d(\widehat{T})$. Any pendant vertex in $\widehat{T}$ has weight $n-1$. Also, $\omega_{\widetilde{T}}(x)=\omega_{\widehat{T}}(x)$ for any vertex $x$ in a branch at $w_1$ in $\widehat{T}$ which does not contain $v_2$. In particular, $\omega_{\widetilde{T}}(w_1)=\omega_{\widehat{T}}(w_1)$. It remains to consider the vertices $a_1,\cdots, a_k$ and $a_{k+1}=v_2$.

Case-I: The weight of $w_1$ in $\widetilde{T}$ corresponds to a branch $\bar{B}$ at $w_1$ not containing $v_2$ (in this case, observe that we must have $w_1=w_2$). The branch in $\widehat{T}$ at $a_i$ containing $w_1$ contains $\bar{B}$. So
$$\omega_{\widehat{T}}(a_i)> |E(\bar{B})|=\omega_{\widetilde{T}}(w_1)=\omega_{\widehat{T}}(w_1),$$
for all $1\leq i\leq k+1$.

Case-II: The weight of $w_1$ in $\widetilde{T}$ corresponds to the branch $\widetilde{B}$ at $w_1$ containing $v_2$. Since $\omega_{\widetilde{T}}(w_1) \leq \omega_{\widetilde{T}}(a_i)$, the weight of $a_i$ in $\widetilde{T}$ must correspond to the branch at $a_i$ containing $w_1$. Let $\widetilde{B}_1$ denote the branch in $\widetilde{T}$ at $a_1$ containing $w_1$. Note that $\widetilde{B}_1$ is also a branch at $a_1$ in $\widehat{T}$ containing $w_1$. Now, for $i=1$,
$$\omega_{\widehat{T}}(a_1)= |E(\widetilde{B}_1)|=\omega_{\widetilde{T}}(a_1)\geq \omega_{\widetilde{T}}(w_1)=\omega_{\widehat{T}}(w_1).$$
Then, for $2\leq i\leq k+1$, we have
$$\omega_{\widehat{T}}(a_i)> |E(\widetilde{B}_1)|\geq \omega_{\widehat{T}}(w_1).$$
Thus $C_d(\widetilde{T})=C_d(\widehat{T})$ and hence $d_{\widehat{T}}(C_d, S_c) \geq d_{\widetilde{T}}(C_d, S_c).$

If $\widehat{T}$ is a path-star tree, then we are done. Otherwise, let $C_1, C_2, \ldots, C_s$ be the branches at $w_1$ in $\widehat{T}$, where $C_1$ is the branch containing $v_2$. Note that, by applying continuously the process of detaching a path from a vertex of degree at least three and attaching it at a pendant vertex, we may convert any given tree into a path.

Now transform each of the branches $C_i$, $2\leq i\leq s$, into paths at $w_1$ by continuously using the graph operation as in Lemma \ref{lem-cdsc} (taking suitable pendant vertices for $z$) to obtain a new tree $\overline{T}$. The weight of $w_j$, $j\in\{1,2\}$, in $\overline{T}$ is equal to that of in $\widehat{T}$. Applying similar arguments as before, it can be seen that the weight of any other vertex in $\overline{T}$ is grater than $\omega_{\overline{T}}(w_1)$. So $C_d(\overline{T}) = C_d(\widehat{T})$. Also, Lemma \ref{lem-cdsc} implies that $v_2\in S_c(\overline{T})$. Therefore, 
$$d_{\overline{T}}{(C_d,S_c)} \geq d_{\widehat{T}}{(C_d, S_c)}.$$
Let $\bar{C}_1, \bar{C}_2, \ldots, \bar{C}_s$ be the branches at $w_1$ in $\overline{T}$, where $\bar{C}_1$ contains $v_2$. Each $\bar{C}_i$, $i\neq 1$, is a path attached to $w_1$. If $s=2$, then $\overline{T}$ is a path-star tree and we are done. Otherwise, again apply Lemma \ref{lem-cdsc} continuously (taking $y=w_1$ and $z$ the pendant vertices in $\bar{C}_i$, $i\neq 1$) to transform $\overline{T}$ to a new tree $T'$ such that there are exactly two btanches in $T'$ at $w_1$, one is $\bar{C}_1$ containing $v_2$ and the other one is a path. Then $T'$ is a path-star tree and $v_2\in S_c(T')$ by Lemma \ref{lem-cdsc}. Note that $C_d(T')$ remains same or move away from $S_c(\overline{T})$. Therefore, $d_{T'}(C_d,S_c)\geq d_{\overline{T}}{(C_d, S_c)}$. This completes the proof. 
\end{proof}

The position of the centroid of a path-star tree $P_{n - g, g}$ can be expressed in terms of $g$. The following result is straight-forward.

\begin{theorem}
\label{remark4}
The centroid of the path-star tree $P_{n - g, g}$ is given by
\begin{equation*}
C_d(P_{n - g, g}) =
\begin{cases}
\begin{cases}
\{\frac{n + 1}{2}\}, &\text{if $g \leq \frac{n - 1}{2}$}\\
\{n - g\}, &\text{if $g > \frac{n - 1}{2}$}
\end{cases}, &\text{if $n$ is odd,}\\
\begin{cases}
\{\frac{n}{2}, \frac{n}{2} + 1\}, &\text{if $g \leq \frac{n}{2} - 1$}\\
\{n - g\}, &\text{if $g > \frac{n}{2} - 1$}
\end{cases}, &\text{if $n$ is even.}
\end{cases}
\end{equation*}
\end{theorem}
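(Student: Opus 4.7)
The strategy is a direct weight computation followed by a convexity argument on the path. First, I label the vertices: $1,2,\dots,n-g$ along the spine of the path, and $n-g+1,\dots,n$ for the $g$ pendants attached to $n-g$. A simple edge count in each branch gives the formulas
\[
\omega(i)=\max(i-1,\;n-i)\quad\text{for }1\le i\le n-g-1,
\]
\[
\omega(n-g)=\max(n-g-1,\;1)=n-g-1\quad(\text{valid since }n-g\ge 3\text{ when }g\le n-3),
\]
and $\omega(v)=n-1$ for every pendant $v$ attached at $n-g$, since the unique branch at such a $v$ is the whole rest of the tree. This takes care of the weight of every vertex.

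Next, I eliminate pendants: $\omega=n-1$ is the largest possible weight in any tree on $n$ vertices, so no pendant can be in $C_d(P_{n-g,g})$, and vertex $1$ also has weight $n-1$. Hence the centroid lies among $\{2,3,\dots,n-g\}$. On this range the function $h(i)=\max(i-1,n-i)$ is convex: it equals $n-i$ and is strictly decreasing for $i\le (n+1)/2$, and equals $i-1$ and is strictly increasing for $i\ge(n+1)/2$. Thus, over integers in $\{1,\dots,n-g-1\}$, the minimum of $h$ is attained at $i=(n+1)/2$ when $n$ is odd, and at the two consecutive values $i=n/2,\;n/2+1$ when $n$ is even, provided these indices do not exceed $n-g-1$; otherwise the restricted minimum on the spine is attained at the right endpoint $i=n-g-1$, with value $h(n-g-1)=g+1$. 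The remaining task is to compare this path-minimum with $\omega(n-g)=n-g-1$.

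I now handle the two parities. For $n$ odd: when $g\le (n-3)/2$ the unrestricted minimizer $(n+1)/2$ lies inside $\{1,\dots,n-g-1\}$ and gives weight $(n-1)/2$, while $\omega(n-g)=n-g-1\ge (n+1)/2$, so $C_d=\{(n+1)/2\}$. When $g=(n-1)/2$ the identity $n-g=(n+1)/2$ holds and a direct check shows $\omega(n-g)=(n-1)/2$ beats the restricted spine minimum $g+1=(n+1)/2$, so again $C_d=\{(n+1)/2\}$. When $g\ge (n+1)/2$, the spine minimum is $g+1$ while $\omega(n-g)=n-g-1<g+1$, giving $C_d=\{n-g\}$. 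The $n$ even case is completely analogous, the only difference being that $h$ has the two consecutive integer minimizers $n/2,\;n/2+1$; the threshold shifts to $g\le n/2-1$ versus $g>n/2-1$, and the boundary value $g=n/2-1$ (where $n-g=n/2+1$ is itself one of the two minimizers) is checked directly to preserve $C_d=\{n/2,n/2+1\}$.

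The main obstacle, though a minor one, is precisely these boundary values $g=(n-1)/2$ (odd $n$) and $g=n/2-1$ (even $n$), where the generic candidate from the convexity argument coincides with the pendant-attachment vertex $n-g$ and one must verify directly that the formula in the statement continues to hold. All other cases are routine once the weight formulas of the first paragraph are in hand.
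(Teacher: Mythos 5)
Your proof is correct: the weight formulas $\omega(i)=\max(i-1,n-i)$ on the spine, $\omega(n-g)=n-g-1$, and $\omega(v)=n-1$ for pendants are all right, and your case analysis (including the boundary cases $g=\frac{n-1}{2}$ and $g=\frac{n}{2}-1$, where the attachment vertex $n-g$ coincides with a spine minimizer) matches the stated formula. The paper omits the proof entirely, calling the result straightforward, and your direct computation is precisely the routine verification the authors intend.
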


Using Theorems \ref{remark1} and \ref{remark4}, we prove the following.

\begin{theorem}\label{remark5}
The distance between the centroid and the subtree core of the path-star tree $P_{n-g, g}$ is given by the following:
If $n$ is odd, then
\begin{equation*}
d_{P_{n - g, g}} (C_d,S_c)=
\begin{cases}
\begin{cases}
\frac{2^g - g - 1}{2}, &\text{if $n - g$ is even}\\
\frac{2^g - g - 2}{2}, &\text{if $n - g$ is odd}
\end{cases}, &\text{if $2^g + 1 \leq n - g$}\\
\begin{cases}
\frac{n - 1}{2} - g, &\text{if $g \leq \frac{n-1}{2}$}\\
0, &\text{if $g > \frac{n-1}{2}$}
\end{cases}, &\text{if $2^g + 1 > n - g.$}
\end{cases}
\end{equation*}
If $n$ is even, then
\begin{equation*}
d_{P_{n - g, g}}(C_d,C_c) =
\begin{cases}
\begin{cases}
\frac{2^g - g - 2}{2}, &\text{if $n - g$ is even}\\
\frac{2^g - g - 3}{2}, &\text{if $n - g$ is odd}
\end{cases}, &\text{if $2^g + 1 \leq n - g$}\\
\begin{cases}
\frac{n}{2} - 1 - g, &\text{if $g \leq \frac{n}{2} -1$}\\
0, &\text{if $g > \frac{n}{2} -1$}
\end{cases}, &\text{if $2^g + 1 > n - g.$}
\end{cases}
\end{equation*}
\end{theorem}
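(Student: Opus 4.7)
The statement is a direct computational consequence of Theorem \ref{remark1} (which pinpoints $S_c(P_{n-g,g})$) and Theorem \ref{remark4} (which pinpoints $C_d(P_{n-g,g})$). My plan is simply to substitute the two location formulas into $d_T(C_d,S_c)$ and simplify, splitting into cases according to the four Boolean flags that appear in the two theorems: whether $2^g+1\leq n-g$ or $2^g+1>n-g$; whether $n$ is even or odd; and (when the first flag is ``yes'') whether $n-g$ is even or odd; together with the centroid dichotomy on the size of $g$.

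I would first establish a book-keeping lemma: whenever $2^g+1\leq n-g$, the centroid lies strictly in the interior of the path, i.e.\ $g\leq\lfloor(n-1)/2\rfloor$ in the odd case and $g\leq n/2-1$ in the even case. This follows immediately from $2^g\geq g+1$ for $g\geq 1$, which yields $n-g\geq 2^g+1\geq g+2$. So in the ``small $2^g$'' regime, the centroid is always $\{(n+1)/2\}$ or $\{n/2,\,n/2+1\}$, never $\{n-g\}$.

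With that in hand I would just go through the eight sub-cases. In the regime $2^g+1\leq n-g$, both positions lie on the path of $P_{n-g,g}$, and the subtree core sits at offset $2^{g-1}$ to the right of the path midpoint $\tfrac12(n-g)$, while the centroid sits at (or next to) $n/2$; subtracting gives expressions of the form $(2^g-g-c)/2$ with $c\in\{1,2,3\}$ depending on parities. When the centroid is a two-vertex set $\{n/2,n/2+1\}$ I would note that $2^g\geq g+3$ holds for the relevant odd values of $g\geq 3$ in that sub-case, so the rightmost centroid vertex $n/2+1$ is the one nearer to $S_c$, justifying the claimed $(2^g-g-3)/2$. A completely analogous check handles the case where both $C_d$ and $S_c$ are singletons, and the two mixed cases. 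In the regime $2^g+1>n-g$, we have $S_c=\{n-g\}$, so the distance is either $|(n+1)/2-(n-g)|=(n-1)/2-g$ (or its even analogue $n/2-1-g$), provided $g$ falls in the ``small'' range; otherwise the centroid is also $\{n-g\}$ and the distance is $0$.

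The bookkeeping lemma above ensures that the first sub-case of each half of the formula is only invoked when $g$ is indeed small enough, so there is no inconsistency with the dichotomy in Theorem \ref{remark4}. There is really no conceptual obstacle here; the main risk is arithmetic slip in the eight sub-case subtractions and a careless treatment of which of the (at most) two centroid vertices and two subtree-core vertices actually realize the minimum distance. I would organize the write-up as a single display per regime (odd $n$, even $n$), reading off each sub-case from Theorems \ref{remark1} and \ref{remark4}, exactly mirroring the style of the proof of Theorem \ref{remark3}.
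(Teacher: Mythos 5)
Your proposal is correct and follows essentially the same route as the paper: the preliminary observation that $2^g+1\leq n-g$ forces $g<\frac{n-1}{2}$ (so the centroid is never $\{n-g\}$ in that regime) is exactly the paper's first step, after which both arguments reduce to substituting Theorems \ref{remark1} and \ref{remark4} and subtracting case by case. Your extra check that $2^g\geq g+3$ for the odd $g\geq 3$ arising when $n$ is even and $n-g$ is odd is a sound (and slightly more careful) justification of a sign issue the paper passes over silently.
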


\begin{proof}
First assume that $2^g + 1 \leq n - g$. Then $2g < 2^g +g\leq n-1$ and so $g < \frac{n-1}{2}$. This gives $g\leq \frac{n}{2} -1$ if $n$ is even. Thus, if $n$ is odd, then
\begin{equation*}
d_{P_{n - g, g}}(C_d,S_c) =
\begin{cases}
\frac{n - g + 2^g}{2} - \left(\frac{n + 1}{2}\right), &\text{if $n - g$ is even}\\
\frac{n - g - 1 + 2^g}{2} - \left(\frac{n + 1}{2}\right), &\text{if $n - g$ is odd}.
\end{cases}
\end{equation*}
and if $n$ is even, then
\begin{equation*}
d_{P_{n - g, g}}(C_d,S_c) =
\begin{cases}
\frac{n - g + 2^g}{2} - \left(\frac{n}{2} + 1\right), &\text{if $n - g$ is even}\\
\frac{n - g - 1 + 2^g}{2} - \left(\frac{n}{2} + 1\right), &\text{if $n - g$ is odd}.
\end{cases}
\end{equation*}
Now assume that $2^g + 1 > n - g$. In this case, if $n$ is odd, then
\begin{equation*}
d_{P_{n - g, g}}(C_d,S_c) =
\begin{cases}
n - g - \left(\frac{n + 1}{2}\right), &\text{if $g \leq \frac{n-1}{2}$}\\
0, &\text{if $g > \frac{n-1}{2}$}.
\end{cases}
\end{equation*}
and if $n$ is even, then
\begin{equation*}
d_{P_{n - g, g}}(C_d,S_c) =
\begin{cases}
n - g - \left(\frac{n}{2} + 1\right), &\text{if $g \leq \frac{n}{2} -1$}\\
0, &\text{if $g > \frac{n}{2} -1$}.
\end{cases}
\end{equation*}
Now the theorem follows from the above.
\end{proof}

For a given $n$, we now try to find a $g_0\in\{2,3,\cdots,n-3\}$ for which $P_{n - g_0, g_0}$ will maximize the distance between the centroid and the subtree core among all trees on $n$ vertices.

\begin{proposition}\label{propn6}
For a given $n\geq 5$, let $g_0$ be the smallest integer in $\{2,3,\cdots,n-3\}$ satisfying $2^{g_0} + g_0 > n -1$. Then the path-star tree $P_{n - g_0, g_0}$ maximizes the distance between the centroid and the subtree core among all trees on $n$ vertices.
\end{proposition}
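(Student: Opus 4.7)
The plan is to invoke Theorem \ref{thm-cdsc} to reduce to path-star trees, and then pin down the maximum over $g \in \{2,\ldots,n-3\}$ using the explicit formulas of Theorem \ref{remark5}. The whole argument hinges on the defining inequality $2^{g_0-1} \leq n - g_0$ (obtained from the minimality of $g_0$), together with a small parity observation.

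As a preliminary step I would verify that $g_0 \leq \lfloor (n-1)/2 \rfloor$. For $g_0 \geq 3$ the elementary bound $2^{g_0-1} \geq g_0 + 1$ combined with $2^{g_0-1} \leq n - g_0$ gives $g_0 \leq (n-1)/2$; the case $g_0 = 2$ follows directly from $n \geq 5$. When $n$ is even this reads $g_0 \leq n/2 - 1$. Consequently, since $2^{g_0}+1>n-g_0$, Theorem \ref{remark5} places $P_{n-g_0,g_0}$ in its second case and yields
$$d_{P_{n-g_0,g_0}}(C_d,S_c) = \left\lfloor \frac{n-1}{2} \right\rfloor - g_0.$$

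Next I would split on $g$. For $g > g_0$, the minimality of $g_0$ still gives $2^g+1>n-g$, so the second case of Theorem \ref{remark5} applies and produces $d_{P_{n-g,g}}(C_d,S_c) \leq \lfloor (n-1)/2 \rfloor - g < \lfloor (n-1)/2 \rfloor - g_0$ (with the convention that the expression collapses to $0$ once $g$ exceeds the relevant floor, which is still bounded by the target). For $g < g_0$ the first case of Theorem \ref{remark5} applies, so the distance has the shape $(2^g - g - c)/2$ with $c \in \{1, 2, 3\}$ determined by the parities of $n$ and $n-g$. A short computation shows that consecutive differences in $g$ equal $2^{g-1}$ or $2^{g-1} - 1$, both strictly positive on $g \geq 2$, so $d_{P_{n-g,g}}(C_d,S_c)$ is strictly increasing in $g$ on $\{2,\ldots,g_0-1\}$ and it suffices to compare $g = g_0 - 1$ against $g = g_0$.

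That last comparison breaks into four subcases according to the parities of $n$ and $g_0$. In two of them (same parity, so $n-g_0$ is even) the difference $d_{P_{n-g_0,g_0}}(C_d,S_c) - d_{P_{n-g_0+1,g_0-1}}(C_d,S_c)$ reduces cleanly to $(n - g_0 - 2^{g_0-1})/2$, which is non-negative by the defining inequality of $g_0$. In the remaining two (opposite parity, so $n - g_0$ is odd) the difference reduces instead to $(n - g_0 - 1 - 2^{g_0-1})/2$, which demands the strictly stronger $n - g_0 \geq 2^{g_0-1}+1$. This is where I expect the main obstacle, and the resolution is a parity observation: since $g_0 \geq 2$, the power $2^{g_0-1}$ is even, while $n - g_0$ is odd in these subcases, so $n - g_0 \neq 2^{g_0-1}$; combined with the weak inequality $n-g_0 \geq 2^{g_0-1}$ coming from the definition, this automatically upgrades to $n-g_0 \geq 2^{g_0-1}+1$, completing the proof.
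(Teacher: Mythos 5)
Your proposal is correct and follows essentially the same route as the paper: reduce to path-star trees via Theorem \ref{thm-cdsc}, apply the explicit formulas of Theorem \ref{remark5}, observe monotonicity in $g$ on either side of $g_0$, and settle the borderline comparison at $g=g_0-1$ using the defining inequality $2^{g_0-1}\leq n-g_0$. The only cosmetic differences are that you establish $g_0\leq\lfloor(n-1)/2\rfloor$ directly (where the paper bounds $g_0\leq\lfloor n/2\rfloor$ and disposes of $n=5,7$ by inspection) and that you close the half-unit gap by a parity argument where the paper invokes integrality of the distance difference; these are equivalent.
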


\begin{proof}
By Theorem \ref{thm-cdsc}, we need to show the following two inequalities:
$$d_{P_{n - g_0, g_0}}(C_d, S_c) \geq d_{P_{n - g_0 + k, g_0 - k}}(C_d, S_c) \mbox{ for } k \in \{1, 2, \ldots, g_0 - 2\}$$
and
$$d_{P_{n - g_0, g_0}}(C_d, S_c) \geq d_{P_{n - g_0 - l, g_0 + l}}(C_d, S_c) \mbox{ for } l \in \{1, 2, \ldots, n - g_0 - 3\}.$$

For $g\in\{2,3,\cdots,n-3\}$, if  $2^g + g \leq n - 1$, then $2g < 2^g +g$ implies $g < \frac{n-1}{2}$, that is, $g < \lfloor{\frac{n}{2}}\rfloor$. Thus if $g \geq \lfloor{\frac{n}{2}}\rfloor$, then $2^g + g > n - 1$. So $g_0\leq \lfloor{\frac{n}{2}}\rfloor$ by the definition of $g_0$. Suppose that $g_0 = \lfloor{\frac{n}{2}}\rfloor $. If $n$ even, then $g_0 = \frac{n}{2}$ and so
$$2^{\frac{n}{2} - 1} + \frac{n}{2} - 1 = 2^{g_0 - 1} + g_0 - 1\leq n - 1,$$
which gives $2^{\frac{n}{2}} \leq n$. This is possible only when $n$ is $2$ or $4$. But our assumption is that $n\geq 5$. If $n$ is odd, then $g_0 = \frac{n - 1}{2}$ and so
$$2^{\frac{n - 3}{2}} + \frac{n - 3}{2}= 2^{g_0 - 1} + g_0 - 1 \leq n - 1,$$
which gives $2^{\frac{n - 3}{2}} \leq \frac{n + 1}{2}$. This is possible only when $n$ is $5$ or $7$. For these two values of $n$, it can be verified that $d_{P_{n - g_0, g_0}}(C_d,S_c) \geq d_{P_{n - g, g}}(C_d,S_c)$ for all possible values of $g$.

So assume that $g_0 < \lfloor{\frac{n}{2}}\rfloor$. Note that, for $k \in \{1, 2, \ldots, g_0 - 2\}$,
$$2^{g_0 - k} - (g_0 - k) - [2^{g_0 - k - 1} - (g_0 - k - 1)] = 2^{g_0 - k - 1} - 1 > 0.$$
This implies that
\begin{equation}
\label{eq55}
2^{g_0 - 1} - g_0 + 1 \geq 2^{g_0 - k} - g_0 + k ,
\end{equation}
for all $k \in \{1, 2, \ldots, g_0 - 2\}$. If $n$ is odd, then using (\ref{eq55}), we get
\begin{equation*}\label{eq56}
\begin{aligned}
d_{P_{n - g_0, g_0}}(C_d, S_c) - d_{P_{n - g_0 + k, g_0 - k}}(C_d, S_c) &\geq \frac{n - 1}{2} - g_0 - \left[\frac{2^{g_0 - k} - g_0 + k - 1}{2}\right]\\
&\geq \frac{n - 1}{2} - g_0 - \left[\frac{2^{g_0 - 1} - g_0 + 1 - 1}{2}\right] \\
&= \frac{n - 1 - g_0 - 2^{g_0 - 1}}{2}
\end{aligned}
\end{equation*}
Similarly, if $n$ is even, then
\begin{equation*}\label{eq57}
\begin{aligned}
d_{P_{n - g_0, g_0}}(C_d, S_c) - d_{P_{n - g_0 + k, g_0 - k}}(C_d, S_c) &\geq \frac{n}{2} - 1 - g_0 - \left[\frac{2^{g_0 - k} - g_0 + k - 2}{2}\right]\\
&\geq \frac{n}{2} - 1 - g_0 - \left[\frac{2^{g_0 - 1} - g_0 + 1 - 2}{2}\right]\\
&= \frac{n - 1 - g_0 - 2^{g_0 - 1}}{2}
\end{aligned}
\end{equation*}
Thus, in both case,
\begin{equation*}\label{eq57}
\begin{aligned}
d_{P_{n - g_0, g_0}}(C_d, S_c) - d_{P_{n - g_0 + k, g_0 - k}}(C_d, S_c) &\geq \frac{n - 1 - g_0 - 2^{g_0 - 1}}{2}\\
&= \frac{n - 1 - \left[2^{g_0 - 1} + g_0 - 1 + 1\right]}{2}\\
&\geq \frac{n - 1 - (n - 1 + 1)}{2}\\
&= \frac{-1}{2}
\end{aligned}
\end{equation*}
Since the left hand side is an integer, we must have
$$d_{P_{n - g_0, g_0}}(C_d, S_c) - d_{P_{n - g_0 + k, g_0 - k}}(C_d, S_c) \geq 0.$$

We now show that $d_{P_{n - g_0, g_0}}(C_d, S_c) \geq d_{P_{n - g_0 - l, g_0 + l}}(C_d, S_c)$ for $l \in \{1, 2, \ldots, n - g_0 - 3\}$. If $n$ is odd, then
$$d_{P_{n - g_0, g_0}}(C_d, S_c) - d_{P_{n - g_0 - l, g_0 + l}}(C_d, S_c) \geq \frac{n - 1}{2} - g_0 - \frac{n - 1}{2} + g_0 + l = l.$$
Similar argument holds if $n$ is even. This completes the proof.
\end{proof}

As a consequence of Proposition \ref{propn6}, we have the following.

\begin{theorem}
Let $T$ be tree on $n\geq 5$ vertices and let $g_0$ be the smallest positive integer such that $2^{g_0} + g _0> n - 1.$ Then $d_T(C_d,S_c)\leq \lfloor \frac{n-1}{2} \rfloor -g_o.$
\end{theorem}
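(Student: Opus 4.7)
The plan is to chain together the three main results of this section. By Theorem \ref{thm-cdsc}, there exists a path-star tree $P_{n-g,g}$ on $n$ vertices with $d_{P_{n-g,g}}(C_d,S_c) \geq d_T(C_d,S_c)$, and by Proposition \ref{propn6}, the path-star tree $P_{n-g_0,g_0}$ realizes the maximum of $d_{P_{n-g,g}}(C_d,S_c)$ over all admissible $g$. Thus the task reduces to computing $d_{P_{n-g_0,g_0}}(C_d,S_c)$ explicitly and checking that this value equals $\lfloor (n-1)/2 \rfloor - g_0$.

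For the computation I would invoke Theorem \ref{remark5}. By the definition of $g_0$ we have $2^{g_0} + g_0 > n - 1$, equivalently $2^{g_0} + 1 > n - g_0$, which places us in the second branch of the case distinction in Theorem \ref{remark5}. Within that branch I need to verify that we fall in the subcase where the distance is nonzero, namely $g_0 \leq \frac{n-1}{2}$ when $n$ is odd, and $g_0 \leq \frac{n}{2}-1$ when $n$ is even. The first paragraph of the proof of Proposition \ref{propn6} already established $g_0 \leq \lfloor n/2 \rfloor$; the only remaining case to exclude is $n$ even with $g_0 = n/2$, but this would force $2^{n/2 - 1} + n/2 - 1 \leq n-1$, i.e.\ $2^{n/2 - 1} \leq n/2$, which fails for $n \geq 5$ even. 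Hence Theorem \ref{remark5} yields
$$d_{P_{n-g_0,g_0}}(C_d,S_c) = \begin{cases} \dfrac{n-1}{2} - g_0, & \text{if } n \text{ is odd}, \\[4pt] \dfrac{n}{2} - 1 - g_0, & \text{if } n \text{ is even}, \end{cases}$$
and both expressions equal $\lfloor (n-1)/2 \rfloor - g_0$, which delivers the stated upper bound.

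There is no genuine obstacle here: the hard work was done in Theorem \ref{thm-cdsc} (the reduction from an arbitrary tree to a path-star tree), in Proposition \ref{propn6} (the identification of $P_{n-g_0,g_0}$ as the maximizer among path-star trees), and in Theorem \ref{remark5} (the closed-form for the distance). The only minor verification is ruling out the degenerate subcases of Theorem \ref{remark5} in which the distance collapses to $0$; this is the short arithmetic check described above, and it depends only on the hypothesis $n \geq 5$.
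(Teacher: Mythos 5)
Your argument is correct and is exactly the route the paper intends: the paper states this theorem simply ``as a consequence of Proposition \ref{propn6}'', leaving implicit the chain through Theorem \ref{thm-cdsc}, Proposition \ref{propn6} and the evaluation of $d_{P_{n-g_0,g_0}}(C_d,S_c)$ via Theorem \ref{remark5} that you spell out. Your verification that the degenerate subcase ($n$ even, $g_0=n/2$) cannot occur for $n\geq 5$ matches the computation already carried out in the first paragraph of the proof of Proposition \ref{propn6}, so there is nothing new or missing here.
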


\section{ Position of the centroid}

In this section, we study the position of the centroid with respect to the center and the subtree core of a tree. We prove that the centroid always lies in the path connecting the center and the subtree core in any path-star tree. However, this statement need not be true for a general tree.

\begin{proposition}\label{propn7}
In any path-star tree $P_{n - g, g}$, the centroid lies in the path connecting the center and the subtree core.
\end{proposition}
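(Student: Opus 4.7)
The plan is to use the explicit descriptions of the center, the centroid, and the subtree core of $P_{n-g,g}$ furnished by Theorems \ref{remark1}, \ref{remark2}, and \ref{remark4}. All three sets $C(P_{n-g,g})$, $C_d(P_{n-g,g})$, $S_c(P_{n-g,g})$ are contained in the spine $\{2,3,\ldots,n-g\}$ of the tree (no pendant appears in any of them, by Remark \ref{rem-psc}), and a direct inspection of the three formulas shows that every center vertex sits weakly to the left of every subtree core vertex on this spine. Hence the path in $P_{n-g,g}$ joining the center to the subtree core is an integer interval on the spine, and the proposition reduces to showing that every centroid vertex lies in this interval.

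More concretely, writing $c = \max C(P_{n-g,g})$ and $s = \min S_c(P_{n-g,g})$, it suffices to prove the sharper bounds
\begin{equation*}
c\;\leq\;d\;\leq\;s\qquad\text{for every }d\in C_d(P_{n-g,g}).
\end{equation*}
The argument splits into two regimes according to whether $2^{g}+1 > n-g$ or $2^{g}+1 \leq n-g$, and within each regime, on the parities of $n$ and $n-g$ and on the size of $g$ relative to $\lfloor \frac{n-1}{2}\rfloor$.

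In the regime $2^{g}+1 > n-g$, Theorem \ref{remark1} gives $S_c=\{n-g\}$. By Theorem \ref{remark4}, the centroid is either at the middle of $P_n$ or at $n-g$ itself. In the latter sub-case the centroid equals the subtree core and $d \leq s$ is immediate; in the former sub-case $d \leq s$ is precisely the hypothesis $g \leq \lfloor \frac{n-1}{2}\rfloor$ (respectively $g \leq \frac{n}{2}-1$ for even $n$), while $c \leq d$ is a one-line arithmetic check via Theorem \ref{remark2}. In the regime $2^{g}+1 \leq n-g$, the chain $2g \leq 2^g + g \leq n-1$ forces $g \leq \lfloor \frac{n-1}{2}\rfloor$, so the centroid is automatically at the middle of $P_n$. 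The inequality $c \leq d$ is again a short parity check, and $d \leq s$ reduces to an inequality of the form $2^g \geq g+k$ with $k\in\{1,2,3\}$ depending on the parities of $n$ and $n-g$. The worst case $k=3$ arises only when $n$ is even and $n-g$ is odd, which forces $g$ to be odd and hence $g\geq 3$, so $2^g \geq g+3$ holds.

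The main obstacle is not conceptual but purely book-keeping: the formulas split into sub-cases according to the parities of $n$ and $n-g$, the comparison of $2^g+1$ with $n-g$, and (in the first regime) the location of the centroid, giving roughly eight sub-cases. Each reduces to a one-line arithmetic inequality; the only mildly delicate point is the sub-case needing $2^g \geq g+3$, which is rescued by the parity-forced lower bound $g \geq 3$.
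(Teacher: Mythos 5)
Your proposal is correct and follows essentially the same route as the paper: both proofs read off the positions of $C$, $C_d$ and $S_c$ on the spine from Theorems \ref{remark1}, \ref{remark2} and \ref{remark4}, split on whether $2^g+1>n-g$ and on the location of the centroid, and finish with short arithmetic comparisons. Your explicit $c\leq d\leq s$ bookkeeping (including the parity observation that $n$ even and $n-g$ odd forces $g\geq 3$) is if anything slightly more careful than the paper's version, which compares $C$ and $S_c$ to the single centroid vertex $\lceil n/2\rceil$.
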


\begin{proof} By Theorem \ref{remark2},  the vertex $\lceil{\frac{n - g + 1}{2}}\rceil \in C(P_{n - g, g})$. Again, by Theorem \ref{remark4}, $C_d(P_{n-g,g})=\{n-g\}$ or the vertex $\lceil{\frac{n}{2}}\rceil \in C_d(P_{n - g, g})$. First assume that $C_d(P_{n-g,g})=\{n-g\}$. This happens only when $g> \lfloor\frac{n-1}{2}\rfloor$. Then
$$2^g +1> 2^{\lfloor\frac{n-1}{2}\rfloor}+1 > n- \lfloor\frac{n-1}{2}\rfloor > n-g.$$
So, by Theorem \ref{remark1}, $S_c(P_{n - g, g})=\{n-g\}$. As $n-g>\lceil{\frac{n - g + 1}{2}}\rceil$ so, the statement follows.

Now assume that $\lceil{\frac{n}{2}}\rceil \in C_d(P_{n - g, g})$. This happens only when $g\leq \lfloor\frac{n}{2}\rfloor$. Since $\lceil{\frac{n - g + 1}{2}}\rceil \leq \lceil{\frac{n}{2}} \rceil$ for $g \in\{2,3,\cdots,
\lfloor\frac{n}{2}\rfloor\}$, the center $C(P_{n - g, g})$ is contained in the branch at  $\lceil{\frac{n}{2}}\rceil$ which contains the vertex $\lceil{\frac{n}{2} }\rceil - 1$. If $2^g +1>n-g$, then $S_c(P_{n - g, g})=\{n-g\}$ and the statement follows. If $2^g +1\leq n-g$, then $\lceil \frac{n-g+2^g}{2}\rceil\in S_c(P_{n - g, g})$. Since $\lceil \frac{n-g+2^g}{2}\rceil > \lceil{\frac{n}{2}}\rceil$, $S_c(P_{n - g, g})$ is contained in the branch at $\lceil{\frac{n}{2}}\rceil$ which contains the vertex $n-g$ and the statement follows.
\end{proof}

We now give an example of a tree in which the centroid does not lie in the path connecting the center and the subtree core.

\begin{figure}[!ht]
 \includegraphics[scale=1.2]{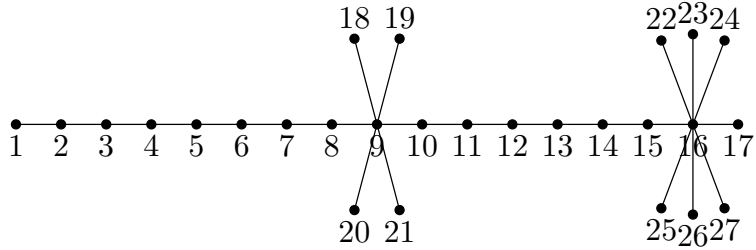}
\caption{Centroid outside the path connecting center and subtree core}\label{fig:4}
\end{figure}

\begin{example} Consider the tree $T$ in Figure \ref{fig:4}. Observe that $C(T) = \{9\},$ and $C_d(T) = \{10\}$. We will show that $S_c(T) = \{9\}$.
Let $B_1, B_2$ be the two components of $T-\{8,9\}$ (deleting the edge $\{8,9\}$ from $T$) containing vertices $8$ and $9,$ respectively.  Then
\begin{equation}
\begin{aligned}
f_T(9) - f_T(8) &= f_{B_2}(9) + f_{B_2}(9)f_{B_1}(8) - f_{B_1}(8) - f_{B_2}(9)f_{B_1}(8)\\
&= f_{B_2}(9) - f_{B_1}(8) >0.
\end{aligned}
\end{equation}
The last inequality holds since $B_2$ contains a copy of $B_1$ (by identifying the vertex $8$ of $B_1$ with $9$ of $B_2$) and $B_2$ has more vertices than $B_1$. So $f_T(9 )> f_T(8)$. Let $M$ and $N$ be the two components of $T-\{9,10\}$ containing vertices $9$ and $10,$ respectively. Then
\begin{equation}
\begin{aligned}
f_T(9) - f_T(10) &= f_{M}(9) + f_{M}(9)f_{N}(10) - f_N(10) - f_{M}(9)f_{N}(10)\\
&= f_{M}(9) - f_N(10)\\
&= (9 \times 2^4) - (6 +2^7)\\
&= 144 - 134>0.
\end{aligned}
\end{equation}
So $f_T(9) > f_T(10)$ and hence $S_c(T) = \{9\}$. Thus $C_d(T)$ does not lie on the path connecting $C(T)$ and $S_c(T)$.
\end{example}

\vskip 1cm

\noindent{\bf Address}:\\
School of Mathematical Sciences\\
National Institute of Science Education and Research (HBNI), Bhubaneswar\\
P.O.- Jatni, District- Khurda, Odisha-752 050, India\\
E-mails: dheer.nsd@niser.ac.in, klpatra@niser.ac.in

\begin{thebibliography}{99}
\bibitem{har} F. Harary, Graph Theory, Addison-Wesley Publishing Co., 1969.
\bibitem{mit} S. L. Mitchell, Another characterization of the centroid of a tree, Discrete Math. 24 (1978), 277--280.
\bibitem{klp} K. L. Patra, Maximizing the distance between center, centroid and characteristic set of a tree, Linear Multilinear Algebra 55 (2007), 381--397.
\bibitem{sw} L. A. Sz\'{e}kely and H. Wang, On subtrees of trees, Adv. Appl. Math. 34 (2005), 138--155.
\bibitem{zel} B. Zelinka, Median and peripherians of trees, Arch. Math. 4 (1968), 87--95.
\end{thebibliography}
\end{document}